\documentclass[12pt, a4paper, reqno]{amsart}
\usepackage{amssymb}
\usepackage{amsbsy}
\usepackage{xcolor}
\usepackage[normalem]{ulem}
\usepackage[hdivide={2.5cm,,2.5cm}, vdivide={3.5cm,,2.8cm}]{geometry}
\usepackage{amssymb}
\usepackage{amsthm}
\usepackage{amsmath}
\usepackage{mathtools}
\usepackage[pctexwin]{graphicx}
\usepackage{stackengine}

\usepackage{pdfsync}

\makeatletter
\newcommand*{\Log}{\mathop{\operator@font Log}\nolimits}
\newcommand*{\Arg}{\mathop{\operator@font Arg}\nolimits}
\newcommand*{\tg}{\mathop{\operator@font tg}\nolimits}
\newcommand*{\ctg}{\mathop{\operator@font ctg}\nolimits}
\newcommand*{\cosec}{\mathop{\operator@font cosec}\nolimits}
\newcommand*{\arctg}{\mathop{\operator@font arctg}\nolimits}
\newcommand*{\arcctg}{\mathop{\operator@font arcctg}\nolimits}
\newcommand*{\sh}{\mathop{\operator@font sh}\nolimits}
\newcommand*{\ch}{\mathop{\operator@font ch}\nolimits}
\makeatother


\newcommand{\REM}[1]{\relax}

\numberwithin{equation}{section}


\newcommand{\UH}{\mathbb{H}}

\newcommand{\Real}{\mathbb{R}}
\newcommand{\Natural}{\mathbb{N}}
\newcommand{\Complex}{\mathbb{C}}

\newcommand{\ComplexE}{\overline{\mathbb{C}}}

\newcommand{\UD}{\mathbb{D}}
\newcommand{\clS}{\mathcal{S}}

\newcommand{\clSt}[1]{\mathrlap{\hskip.125em\widetilde{\hphantom{\clS}\vphantom{\vbox to1.5ex{\vss}}}}\clS_{#1}^B\vphantom{\vbox to2.3ex{\vss}}}

\newcommand{\clsubS}{\mathrlap{\hskip.125em\widetilde{\hphantom{\clS}\vphantom{\vbox to1.5ex{\vss}}}}\clS\vphantom{\vbox to2.3ex{\vss}}}

\newcommand{\clH}{\mathcal{H}}
\newcommand{\Cara}{\mathcal{C}}

\newcommand{\onto}
{\xrightarrow{\scriptstyle \!\mathsf{onto}\,}}

\newcommand{\into}
{\xrightarrow{\hbox{\lower.2ex\hbox{$\scriptstyle \smash{\mathsf{into}}$}}\,}}

\let\D=\UD

\let\C=\Complex

\newcommand{\STOP}{\par\hbox to\textwidth{\color{red}\leaders\hbox{\,STOP\,}\hfil}\par}

\newcommand{\mcite}[1]{\csname b@#1\endcsname}

\theoremstyle{theorem}
\newtheorem{result}{Theorem}
\setcounter{result}{64}









\def\Re{\mathop{\mathtt{Re}}}
\def\Im{\mathop{\mathtt{Im}}}

\newcommand{\sgn}{\mathop{\mathrm{sgn}}}

\emergencystretch15pt \frenchspacing

\newtheorem{theorem}{Theorem}[section]

\newtheorem{proposition}[theorem]{Proposition}
\newtheorem{corollary}[theorem]{Corollary}
\newtheorem{problem}{Problem}

\theoremstyle{definition}

\newtheorem{example}[theorem]{Example}

\theoremstyle{remark}
\newtheorem{remark}[theorem]{Remark}
\numberwithin{equation}{section}

\newcommand{\di}{\mathrm{d}}

\def\mydot#1{\smash{\stackrel{\,\lower.12ex\hbox{\text{\LARGE.}}}{#1}}\vphantom{\raise.2ex\hbox{$#1$}}}

\newcommand{\SHOWCORRECTIONS}{%
\newcommand{\nv}[1]{{\color{green!60!black}##1}}%
\newcommand{\comm}[1]{{\color[rgb]{0.5,0,0.5}##1}}%
\newcommand{\dv}[1]{{\color[rgb]{0.65,0.74,0.79}\sout{##1}}}%
\newcommand{\IN}[1]{{\color[rgb]{1.00,0.33,0.33}##1}}
\newcommand{\ID}[1]{{\color[rgb]{0.65,0.55,0.62}\sout{##1}}}%
\newcommand{\IC}[1]{{\color[rgb]{0.00,0.00,1}##1}}%
\newcommand{\RD}[1]{\textcolor{red}{##1}}%
}%

\newcommand{\HIDECORRECTIONS}{%
\newcommand{\nv}[1]{##1}
\newcommand{\dv}[1]{\relax}%
\newcommand{\comm}[1]{\relax}%
\let\IN=\nv%
\let\ID=\dv%
\let\IC=\comm%
\newcommand{\RD}[1]{##1}%
}%

\SHOWCORRECTIONS

        \let\bibciteOLD=\bibcite
        \renewcommand{\bibcite}[2]{\bibciteOLD{#1}{\textsc{#2}}}
        \makeatletter
        \renewcommand{\@biblabel}[1]{[\textsc{#1}]\hfill}
        \makeatother

    %
    %

\title[Univalent functions with quasiconformal extensions]{Univalent functions with quasiconformal extensions: Becker's class and estimates of the third coefficient.}

\author[P. Gumenyuk]{Pavel Gumenyuk}

\address{Institutt for matematikk og fysikk, Universitetet i Stavanger, 4036 Stavanger, \hbox{Norway}}
\email{pavel.gumenyuk@uis.no}
\author[I. Hotta]{Ikkei Hotta$^\dag$}
\address{Department of Applied Science, Yamaguchi University, 2-16-1 Tokiwadai, Ube 755-8611, \hbox{Japan}}
\email{ihotta@yamaguchi-u.ac.jp}
\thanks{$^\dag$ Ikkei Hotta is supported by JSPS KAKENHI Grant Number 17K14205.}
\subjclass[2010]{Primary 30C62; Secondary 30C35, 30C50, 30C75, 30D05}
\keywords{Univalent function, quasiconformal extension, Loewner chain, Becker's extension, coefficient estimate, parametric method}
\date{\today}

\let\le=\leqslant
\let\ge=\geqslant

\newcommand{\esssup}{\mathop{\mathrm{ess\,sup}}}

    %
    %

\begin{document}

\begin{abstract}
We investigate univalent functions $f(z)=z+a_2z^2+a_3z^3+\ldots$ in the unit disk~$\UD$ extendible to $k$-q.c.(=quasiconformal) automorphisms of~$\C$. In particular, we answer a question on estimation of~$|a_3|$ raised by K\"uhnau and Niske [Math. Nachr. {\bf 78} (1977) 185--192]. This is one of the results we obtain studying univalent functions that admit q.c.-extensions via a construction, based on Loewner's parametric representation method, due to Becker [J. Reine Angew. Math. {\bf 255} (1972) 23--43].
Another problem we consider is to find the maximal $k_*\in(0,1]$ such that every univalent function~$f$ in~$\UD$ having a $k$-q.c. extension to~$\C$ with $k\le k_*$ admits also a Becker q.c.-extension, possibly with a larger upper bound for the dilatation. We prove that $k_*>1/6$. Moreover, we show that in some cases, Becker's extension turns out to be the optimal one.
Namely, given any $k\in(0,1)$, to each finite Blaschke product there corresponds a univalent function $f$ in~$\UD$ that admits a Becker $k$-q.c. extension but no $k'$-q.c. extensions to~$\C$ with~$k'<k$.
\end{abstract}

\maketitle

\section{Introduction}
Conformal mappings of~$\UD:=\{z:|z|<1\}$ admitting quasiconformal extensions is a classical topic in Geometric Function Theory closely related to Teichm\"uller Theory, see e.g.~\cite{Lehto_book,Schober}. Let $k\in(0,1)$. A function~$f$ holomorphic in a domain~$D\subset\Complex$ is said to be \hbox{$k$-q.c.} extendible to~$\Complex$ (or to~$\ComplexE$) if there exists a $k$-quasiconformal automorphism ${F:\Complex\to\Complex}$ (respectively, ${F:\ComplexE\to\ComplexE}$) such that $F|_\UD=f$. Note that $k$-q.c. extendibility to~$\Complex$, which we will be mostly concerned with in this paper, is equivalent to $k$-q.c. extendibility to~$\ComplexE$ with the additional condition that~$F(\infty)=\infty$.

Denote by $\clS$ the class of all univalent (i.e. injective holomorphic) functions $$\UD\ni z\mapsto f(z)=z+\sum_{n=2}^{+\infty}a_nz^n.$$ One of the main tools to study this class is the \textsl{parametric representation}, which goes back to Loewner~\cite{Loewner}, see e.g. \cite[\S6.1]{Pommerenke}, see also~\cite{Kufarev43,Pommerenke65,Gut}.
Namely, the class $\clS$ can be represented as an image of the convex cone formed by the so-called \textsl{Herglotz functions}, i.e. functions $p:\UD\times[0,+\infty)\to\Complex$ such that $p(z,\cdot)$ is locally integrable for each~$z\in\UD$ and $p(\cdot,t)$ is holomorphic in~$\UD$ and satisfies $\Re p(\cdot,t)\ge0$ for a.e.~$t\ge0$. It is known that for any Herglotz function $p$,
the initial value problem for the Loewner\,--\,Kufarev ODE
\begin{equation}\label{EQ_LK-ODE0}
\frac{\di w}{\di t}=-w\,p(w,t),\quad t\ge0,\qquad w(z,0)=z\in\UD,
\end{equation}
has a unique solution $w=w(z,t)$ and the locally uniform limit
\begin{equation}\label{EQ_f-limit}
f(z):=\lim_{t\to+\infty}\frac{\,\mathrlap{w(z,t)}\hphantom{w'(0,t)}}{\,w'(0,t)},\quad z\in\UD,
\end{equation}
where $w'$ denotes the derivative w.r.t.~$z$, exists and belongs to~$\clS$. On the other hand, see e.g.~\cite[Theorem~6.1 on p.\,159]{Pommerenke} or~\cite{Gut}, \textit{every} function~$f\in\clS$ can be represented by~\eqref{EQ_f-limit} with a suitable, and in general not unique, \textsl{normalized} Herglotz function, i.e. a Herglotz function~$p$ with~$\Re p(0,t)=1$ for a.e.~$t\ge0$.

A natural problem arises: \textit{given a subclass~$\clsubS\subset\clS$, find a class of Herglotz functions that generates~$\clsubS$ via~\eqref{EQ_f-limit}.} The answer is known in some cases, e.g. for starlike functions, bounded univalent functions, and for univalent functions with real Taylor coefficients; \hbox{see e.g.\,\cite{Prokhorov-handbook}.}

A partial answer is also known for the subclass~$\clS_k$, $k\in(0,1)$, formed by all $f\in\clS$ admitting $k$-q.c. extension to~$\Complex$. Namely, in 1972, Becker~\cite{Becker72} found a condition on $p$ in the Loewner\,--\,Kufarev equation~\eqref{EQ_LK-ODE0}, see Sect.\,\ref{SS_Becker}, such that the function $f$ given by~\eqref{EQ_f-limit} belongs to~$\clS_k$.
The class~$\clS_k^B$ generated by Herglotz functions that satisfy Becker's condition is a \textit{proper} subset of~$\clS_k$.
In this paper we study~$\clS_k^B$ and its relation with~$\clS_k$. In particular, in Sect.\,\ref{SS_a3} we find the sharp estimate for~$|a_3|$ in~$S_k^B$, see Theorem~\ref{TH_a_3}. An immediate corollary is the answer to a question of K\"uhnau and Niske~\cite{KuhnauNiske}: Theorem~\ref{TH_a_3} implies that $\max_{S_k}|a_3|>k$ for any~$k\in(0,1)$.

Numerous sharp estimates are known for the class~$\clS$, see e.g.~\cite{Duren}, with many of them being motivated by the famous Bieberbach Conjecture concerning estimates for~$|a_n|$, which was proved by de Branges~\cite{deBranges} in~1984. Unfortunately, only a few of these results have been extended to classes~$\clS_k$, see e.g. \cite{Kuhnau69,Lehto71}. In particular, the sharp estimate for $|a_n|$ in~$\clS_k$ is known only for~$n=2$. Remarkably, in most of the cases discussed previously, the extremal functions belong to~$\clS_k^B$. We prove a bit surprising fact that this does not hold for the sharp estimate of~$|a_3|$, see Theorem~\ref{TH_not-extremal}.

\section{Becker's construction of quasiconformal extensions}\label{SS_Becker}
Throughout the paper we make use of Loewner Theory, the classical version of which can be found in~\cite[Chapter~6]{Pommerenke}.
Following Becker~\cite{Becker76}, \cite[\S5.1]{Becker80}, we replace the usual normalization ${p(0,t)=1}$ by a weaker
condition
\begin{equation}\label{EQ_weaker-normalization-for-p}
\int_0^{+\infty}\!\!\Re p(0,t)\,\di t=+\infty,
\end{equation}
which still implies that $\bigcup\limits_{t\ge0}f_t(\UD)=\C$.
In 1972, he discovered the
\hbox{following remarkable fact.}
\begin{result}[\protect{\cite{Becker72,Becker76}}]\label{TH_Becker}
Let $k\in[0,1)$ and let $(f_t)$ be a radial Loewner chain whose Herglotz function~$p$ satisfies
\begin{equation}\label{EQ_Becker-condition}
  p(\UD,t) \subset U(k):=\left\{ w \in \C \colon \left|\frac{w-1}{w+1}\right| \le k\right\}\quad\text{for  a.e.~$t\ge0$}.
\end{equation}
Then for every~$t\ge0$, the function~$f_t$ admits a $k$-q.c. extension to~$\ComplexE$ that fixes~$\infty$. In particular, such an
extension for~$f_0$ is given by
\begin{equation}\label{EQ_BeckerExt}
F(\rho e^{i\theta}):=\left\{
 \begin{array}{ll}
  f_0(\rho e^{i\theta}), & \text{if~$0\le \rho<1$},\\[1ex]
  f_{\log \rho}(e^{i\theta}), & \text{if~$\rho\ge1$}.
 \end{array}
\right.
\end{equation}
\end{result}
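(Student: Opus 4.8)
The plan is to show that the map $F$ defined by \eqref{EQ_BeckerExt} is a $k$-quasiconformal automorphism of $\ComplexE$ fixing~$\infty$, by checking that it is a homeomorphism and that its complex dilatation satisfies $|\mu_F|\le k$ almost everywhere. The assertion for general $t\ge0$ reduces to the case of $f_0$: the renormalized shifted family $\tilde f_s:=e^{-t}f_{t+s}$ is again a radial Loewner chain, with Herglotz function $(z,s)\mapsto p(z,t+s)$ still obeying \eqref{EQ_Becker-condition} for a.e.~$s$, so if $\tilde F$ denotes its extension, then $e^t\tilde F$ is a $k$-q.c.\ extension of $f_t$ (multiplying by the constant $e^t$ is conformal and leaves the dilatation unchanged). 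Hence I assume $t=0$. On $\UD$ the map $F=f_0$ is holomorphic, so $\mu_F=0$ there, and the whole analysis concerns the exterior $\{|z|>1\}$.

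For the dilatation, fix $|z|>1$, put $s:=\log|z|$ and $\zeta:=z/|z|\in\partial\UD$, so $F(z)=f_s(\zeta)$. The chain obeys the Loewner--Kufarev partial differential equation
\begin{equation*}
\frac{\partial f_s}{\partial s}(\zeta)=\zeta\,f_s'(\zeta)\,p(\zeta,s),
\end{equation*}
the counterpart for $(f_t)$ of the ordinary equation \eqref{EQ_LK-ODE0}. From $s=\tfrac12\log(z\bar z)$ and $\zeta=z\,(z\bar z)^{-1/2}$ one computes $\partial_z s=1/(2z)$, $\partial_{\bar z}s=1/(2\bar z)$, $\partial_z\zeta=\zeta/(2z)$, $\partial_{\bar z}\zeta=-\zeta/(2\bar z)$, and, $f_s$ being holomorphic in its argument, the chain rule gives
\begin{equation*}
F_z=\frac{1}{2z}\Bigl(\frac{\partial f_s}{\partial s}(\zeta)+\zeta f_s'(\zeta)\Bigr),\qquad
F_{\bar z}=\frac{1}{2\bar z}\Bigl(\frac{\partial f_s}{\partial s}(\zeta)-\zeta f_s'(\zeta)\Bigr).
\end{equation*}
Substituting the equation above and cancelling the nonvanishing factor $\zeta f_s'(\zeta)$ leaves the remarkably simple identity
\begin{equation*}
\mu_F(z)=\frac{F_{\bar z}}{F_z}=\frac{z}{\bar z}\cdot\frac{p(\zeta,s)-1}{p(\zeta,s)+1}.
\end{equation*}
As $|z/\bar z|=1$ and \eqref{EQ_Becker-condition} bounds the boundary values of $p(\cdot,s)$ inside the closed disk $U(k)$, i.e. $\bigl|(p(\zeta,s)-1)/(p(\zeta,s)+1)\bigr|\le k$ for a.e.~$\zeta\in\partial\UD$, I conclude $|\mu_F|\le k$ a.e.

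The homeomorphism property follows from the geometry of the chain. Condition \eqref{EQ_Becker-condition} forces $\Re p\ge(1-k)/(1+k)>0$, so the domains $f_s(\UD)$ increase strictly---in fact $\overline{f_s(\UD)}\subset f_{s'}(\UD)$ whenever $s<s'$---and, by \eqref{EQ_weaker-normalization-for-p}, exhaust~$\C$; in particular the quasicircles $\partial f_s(\UD)$ are mutually disjoint and sweep out $\C\setminus f_0(\overline\UD)$. Consequently $F$ takes each circle $\{|z|=e^s\}$ bijectively onto $\partial f_s(\UD)$; the disjointness pins down $s$ (hence $|z|$) and the injectivity of $f_s$ on $\partial\UD$ pins down the argument, giving injectivity of $F$ on the exterior. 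Continuity across $\{|z|=1\}$ is immediate since both branches return $f_0$ there, surjectivity is the exhaustion, and $|F(z)|\to\infty$ as $|z|\to\infty$, so $F$ fixes~$\infty$.

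The main obstacle is regularity: the boundary values of $f_s$ and $p$, the validity of the equation on $\partial\UD$, and membership of $F$ in $W^{1,2}_{\mathrm{loc}}$ (equivalently, the $\mathrm{ACL}$ property), which is what turns the pointwise bound $|\mu_F|\le k<1$ into genuine $k$-quasiconformality. I would resolve this by approximation, exploiting that $U(k)$ is \emph{convex}. Mollifying $p$ in the time variable keeps its values in the convex set $U(k)$, and replacing $p(z,s)$ by $p(rz,s)$ with $r\uparrow1$ makes it holomorphic across $\partial\UD$ while preserving \eqref{EQ_Becker-condition}; the resulting smooth Herglotz functions $p_n$ generate chains $f_s^{(n)}$ that extend analytically past the circle, for which the computation above is classical and each extension $F_n$ is a bona fide $k$-q.c.\ homeomorphism. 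Continuous dependence of the Loewner--Kufarev equation on its data gives $f_s^{(n)}\to f_s$, hence $F_n\to F$, locally uniformly on $\ComplexE$; since $F$ is a nonconstant homeomorphism, stability of $k$-quasiconformality under locally uniform limits shows that $F$ itself is $k$-q.c.
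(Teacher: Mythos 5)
The paper does not prove this statement at all: it is quoted verbatim from Becker \cite{Becker72,Becker76}, so your proposal can only be measured against Becker's classical argument, and in outline it reproduces it. Your reduction to $t=0$ via the shifted chain is fine, and the dilatation identity $\mu_F(z)=(z/\bar z)\,\bigl(p(\zeta,s)-1\bigr)/\bigl(p(\zeta,s)+1\bigr)$ is correct; reassuringly, it agrees with the Beltrami coefficient formula \eqref{EQ_extremal-Beltrami-coefficient} that the paper uses later (note $z^2/|z|^2=z/\bar z$). The approximation scheme exploiting convexity of $U(k)$ (mollification in $t$, dilation $z\mapsto rz$ in space) is likewise the standard route, cf.\ \cite{Ikkei2}.

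There is, however, a genuine circularity in how you assemble the pieces. Your ``homeomorphism'' paragraph presupposes that each $f_s$ extends continuously and injectively to $\partial\UD$ and that the curves $\partial f_s(\UD)$ are quasicircles; none of this is available a priori for a general radial Loewner chain --- the existence of those boundary values is part of what the theorem delivers, and without them formula \eqref{EQ_BeckerExt} does not even define a map on $\{|z|>1\}$. (The strict inclusion $\overline{f_s(\UD)}\subset f_{s'}(\UD)$ is fine: $\Re p\ge(1-k)/(1+k)=:c$ gives $|f_{s'}^{-1}\circ f_s|\le e^{-c(s'-s)}$ on $\UD$.) The same circularity infects the last step: ``continuous dependence'' of the Loewner--Kufarev equation yields $f_s^{(n)}\to f_s$ only locally uniformly \emph{inside} $\UD$, which does not give $F_n\to F$ on the exterior, and you then invoke ``$F$ is a nonconstant homeomorphism'' --- a fact established only in the flawed paragraph. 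The repair is standard and inverts your logical order: the maps $F_n$, suitably normalized (fixing $0$, $\infty$, with controlled derivative at $0$), are uniformly $k$-q.c., hence a normal family equicontinuous on compacta including $\overline\UD$; any locally uniform sublimit $G$ is automatically a $k$-q.c.\ homeomorphism (a locally uniform limit of $k$-q.c.\ homeomorphisms is constant or $k$-q.c., and $G|_\UD=f_0$ is nonconstant), which already proves extendibility, and the equicontinuity of $f_s^{(n)}$ on $\overline\UD$ (each extends to a $k$-q.c.\ map via its shifted chain) shows that $f_s$ does admit a continuous extension to $\overline\UD$ and that $G$ coincides with \eqref{EQ_BeckerExt}. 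In short: the homeomorphism property and the boundary regularity must come \emph{out of} the compactness argument, not go into it; with that reordering your proof is complete and is essentially Becker's.
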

\begin{remark}\label{RM_2018}
According to \cite[Theorem~2]{Istvan}, a sort of converse statement holds. Namely, if $(f_t)$ is a Loewner chain such that all $f_t$'s extend continuously to~$\partial\UD$ and the map $F$ defined by~\eqref{EQ_BeckerExt} is $k$-quasiconformal in~$\C$, then the Herglotz function~$p$ associated with~$(f_t)$ satisfies Becker's condition~\eqref{EQ_Becker-condition}.
\end{remark}

In what follows, for $k\in(0,1)$, we will denote by $\clS_k^B$ the class of all $f\in\clS$ admitting Loewner's representation with the Herglotz function $p$ normalized by $p(0,t)=1$ a.e.\,$t\ge0$ and satisfying~\eqref{EQ_Becker-condition}. A bit larger class of all $f\in\clS$ generated by Herglotz functions subject to Becker's condition~\eqref{EQ_Becker-condition}, but not necessarily normalized, will be denoted by~$\clSt{k}$.

According to Theorem~\ref{TH_Becker}, $\clS^B_k\subset\clSt{k}\subset\clS_k$. It is known that $\clSt{k}\neq\clS_k$, see \hbox{e.g.~\cite[\S5]{Istvan}}. However, it seems that the study of~$\clS_k^B$ and~$\clS_k$ is still of considerable interest. It is worth to mention that Becker's condition~\eqref{EQ_Becker-condition} appears to be sufficient for q.c.-extendibility also in the framework of the general Loewner Theory introduced in~\cite{BracciCD:evolutionI, BracciCD:evolutionII}; see \cite{Istvan}, \cite{Ikkei2}, and~\cite{HottaGum::QC-chordal}. This discussion will be continued in Sect.\,\ref{SS_relation}.

\section{Estimate of the third coefficient}\label{SS_a3}
Below we give a sharp estimate for $|a_3|$ in the class~$\clS_k^B$. As a corollary, we immediately obtain a \emph{negative} answer to the question raised in 1977 by K\"uhnau and Niske~\cite{KuhnauNiske}: does there exist $k_0>0$ such that for any $k\in(0,k_0]$ and any function $f(z)=z+a_2z+a_3 z^3+\ldots$ belonging to~$\clS_k$, the inequality $|a_3|\le k$ holds?

\begin{theorem}\label{TH_a_3}
Let $k\in(0,1)$. Then for every function $f(z)=z + a_2z^2 + a_3z^3+\ldots$ belonging to $\clS_k^B$,
\begin{equation*}
|a_3|\le k\big(1+e^{1-1/k}(1+k)\big).
\end{equation*}
This estimate is sharp and the equality holds only for rotations of the function $f_+\in\clS_k^B$, which is uniquely defined by the Beltrami coefficient~\eqref{EQ_extremal-Beltrami-coefficient} of its q.c.-extension to~$\C$.
\end{theorem}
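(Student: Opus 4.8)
The plan is to turn the estimate into an explicit variational problem over the Herglotz function~$p$ and then solve it. First I would read off $a_2$ and $a_3$ from the Loewner\,--\,Kufarev equation~\eqref{EQ_LK-ODE0}. Differentiating at $z=0$ and using $p(0,t)=1$ gives $w'(0,t)=e^{-t}$, so by~\eqref{EQ_f-limit} we have $f(z)=\lim_{t\to\infty}e^{t}w(z,t)$. Writing $w(z,t)=e^{-t}\bigl(z+b_2(t)z^2+b_3(t)z^3+\cdots\bigr)$ and $p(z,t)=1+p_1(t)z+p_2(t)z^2+\cdots$, substitution into the ODE and comparison of the coefficients of $z^2,z^3$ yield $\dot b_2=-p_1e^{-t}$ and $\dot b_3=-2b_2p_1e^{-t}-p_2e^{-2t}$, with $b_n(0)=0$. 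Since $a_n=\lim_{t\to\infty}b_n(t)$, integrating and using $\int_0^\infty 2g'g\,\di t=g(\infty)^2$ for $g(t)=\int_0^t p_1e^{-s}\,\di s$ produces the representation
\[
a_2=-\int_0^\infty p_1(t)e^{-t}\,\di t,\qquad a_3=a_2^2-\int_0^\infty p_2(t)e^{-2t}\,\di t.
\]

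Next I would encode Becker's condition~\eqref{EQ_Becker-condition}. Since $p(\cdot,t)$ maps $\UD$ into $U(k)$ with $p(0,t)=1$, the function $\omega:=(p-1)/(p+1)$ maps $\UD$ into $k\,\cUD$ and fixes~$0$; by Schwarz's lemma $\omega(z,t)=kz\,u(z,t)$ with $|u|\le1$. Expanding $p=(1+\omega)/(1-\omega)$ gives $p_1=2k\alpha$ and $p_2=\tfrac12p_1^2+2k\beta$, where $\alpha(t)=u(0,t)$, $\beta(t)$ is the next Taylor coefficient of $u(\cdot,t)$, and $|\alpha|\le1$, $|\beta|\le1-|\alpha|^2$. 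Because $\beta$ enters $a_3$ only through the linear term $-2k\int_0^\infty\beta e^{-2t}\,\di t$, whose phase I may choose freely, maximizing over $\beta$ first gives, with equality attainable,
\[
|a_3|\le\bigl|\Xi[p_1]\bigr|+k-\frac{1}{2k}\int_0^\infty|p_1|^2e^{-2t}\,\di t,\qquad \Xi[\phi]:=\Bigl(\int_0^\infty\!\phi e^{-t}\,\di t\Bigr)^2-\frac12\int_0^\infty\!\phi^2e^{-2t}\,\di t,
\]
where now the only constraint is $|p_1(t)|\le2k$.

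The core of the argument is to maximize this right-hand side over $\phi=p_1$. Since $\Xi$ is complex-bilinear, $\Xi[e^{i\gamma}\phi]=e^{2i\gamma}\Xi[\phi]$ while $\int|\phi|^2e^{-2t}\,\di t$ is rotation-invariant, so one may replace $|\Xi|$ by $\Re\Xi$ in the supremum (this is exactly the freedom to rotate $f$). Splitting $\phi=\xi+i\eta$, every contribution depending on $\eta$ turns out to be $\le0$, the decisive point being $\tfrac12-\tfrac1{2k}<0$; hence the extremum has $\phi$ real. This leaves the scalar problem of maximizing $H[\xi]=\bigl(\int_0^\infty\xi e^{-t}\,\di t\bigr)^2+k-\frac{1+k}{2k}\int_0^\infty\xi^2e^{-2t}\,\di t$ over real $\xi$ with $|\xi|\le2k$. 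I would solve it in two stages: for fixed $c=\int_0^\infty\xi e^{-t}\,\di t$ the inner problem of minimizing $\int_0^\infty\xi^2e^{-2t}\,\di t$ is convex, with minimizer $\xi(t)=\min(\lambda e^{t},2k)$ of exponential-then-saturated shape; optimizing the remaining single variable over the free boundary $t_*$ where saturation begins gives $t_*=(1-k)/(2k)$ and the maximal value $k\bigl(1+e^{1-1/k}(1+k)\bigr)$.

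Finally, I would obtain sharpness and uniqueness by tracking the equality cases through all the reductions: $\eta\equiv0$, the saturation $|\beta|=1-|\alpha|^2$ of the Schwarz bound with aligned phase, and the pinned extremal profile of $\xi$. These determine the Herglotz function uniquely up to a rotation of~$f$, and feeding it into Becker's extension~\eqref{EQ_BeckerExt} yields $f_+$ together with its Beltrami coefficient~\eqref{EQ_extremal-Beltrami-coefficient}. I expect the main obstacle to be precisely this last variational step, namely justifying the reduction to real~$\phi$ and, above all, solving the free-boundary optimization rigorously: since $H$ itself is not concave (only the inner minimization is convex), stationarity must be supplemented by the two-stage convexification, after which one still has to verify that the self-consistent profile gives the global maximum rather than a mere critical point.
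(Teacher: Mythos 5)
Your proposal is correct, and while the setup coincides with the paper's, the core variational step takes a genuinely different route. Your integrated representation $a_2=-\int_0^\infty p_1e^{-t}\,\di t$, $a_3=a_2^2-\int_0^\infty p_2e^{-2t}\,\di t$ is exactly what the paper extracts from \eqref{EQ_phase1}--\eqref{EQ_phase2}, and your Schwarz-lemma parametrization $p_1=2k\alpha$, $p_2=\tfrac12p_1^2+2k\beta$ with $|\beta|\le1-|\alpha|^2$ is equivalent to the paper's description of $\clH_k$ via $p=L\circ p_0$ with the Carath\'eodory value region \eqref{EQ_Cara}; your pointwise maximization over $\beta$ plays the role of the paper's choice $c_2=c_2^*$. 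The difference is how the reduced problem is solved: the paper invokes Pontryagin's Maximum Principle (adjoint system \eqref{EQ_adj1}--\eqref{EQ_adj2}, transversality \eqref{EQ_trans}, and the self-consistency equation determining $a=2a_2(+\infty)$), which, being a necessary condition on an infinite horizon, identifies the bang-type candidate and then compares the cases $a=0$ and $a\neq0$; your chain of inequalities---align $\beta$, rotate so that $\Xi[p_1]\ge0$, discard $\eta=\Im p_1$ via $-B^2+(\tfrac12-\tfrac1{2k})\int_0^\infty\eta^2e^{-2t}\,\di t\le0$ (the counterpart of the paper's weights $\tfrac{1\pm k}{2}$ in \eqref{EQ_quadratic} and of its $\sgn$-argument forcing $a\in\Real$), then the two-stage convex/one-dimensional optimization---proves \emph{global} maximality directly. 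I verified the computation: the inner convex minimization does produce the clipped profile $\xi(t)=\min(\lambda e^t,2k)$, matching the paper's \eqref{EQ_c1}, and with switching time $s$ one gets
\begin{equation*}
H(s)=4k^2(1+s)^2e^{-2s}+k-2k(1+k)\bigl(s+\tfrac12\bigr)e^{-2s},\qquad
H'(s)=4k\,s\,e^{-2s}\bigl(1-k-2ks\bigr),
\end{equation*}
with unique interior maximum at $s=t_0=(1-k)/(2k)$ and value $k+k(1+k)e^{-2t_0}=k\bigl(1+e^{1-1/k}(1+k)\bigr)$; the degenerate case $c=0$ (the paper's $a=0$, giving $f_1(z)=z/(1-kz^2)$ with $a_3=k$) is dominated since $H(s)\to k$ as $s\to+\infty$. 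Your equality tracking also goes through: $\eta\equiv0$, strict convexity of the inner problem, strict sign of $H'$ off $t_0$, and the fact that $|\beta|=1-|\alpha|^2$ with $|\alpha|<1$ pins $u(\cdot,t)$ to a M\"obius self-map of $\UD$ (and $u\equiv$ unimodular constant for $t\ge t_0$) recover the paper's explicit extremal Herglotz function, hence $f_\pm$ and the rotation statement. What your route buys is an elementary, self-contained argument in which global optimality is built into the inequality chain (no appeal to PMP nor to the implicit existence of an optimal control); what the paper's route buys is a systematic scheme that transfers to other coefficient functionals over $\clS^B_k$. The obstacle you flag at the end---non-concavity of $H$---is already resolved by your own two-stage scheme, so the plan is complete as stated.
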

The above theorem does not solve the extremal problem $|a_3|\to\max$ \emph{in the whole class~$\clS_k$}. In fact, the following takes place.
\begin{theorem}\label{TH_not-extremal}
For any $k\in(0,1)$,
\begin{equation}\label{EQ_upper-est}
\max_{\clS_k^B}|a_3|<\max_{\clS_k}|a_3|\le \varrho(k):=\min_{\alpha\in(0,1)}\Bigl[\big(1+2e^{-2\alpha/(1-\alpha)}\big)k+4\alpha k^2\Bigr].
\end{equation}
\end{theorem}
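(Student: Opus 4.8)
The plan is to establish the two inequalities separately. For the right-hand inequality $\max_{\clS_k}|a_3|\le\varrho(k)$ I would use the elementary splitting $|a_3|\le\alpha|a_2|^2+|a_3-\alpha a_2^2|$, valid for every $\alpha\in(0,1)$, together with the known sharp bound $|a_2|\le 2k$ on $\clS_k$; the whole point is then to prove the key estimate
\[
|a_3-\alpha a_2^2|\le\bigl(1+2e^{-2\alpha/(1-\alpha)}\bigr)k,\qquad f\in\clS_k,\ \alpha\in(0,1),
\]
after which $|a_3|\le 4\alpha k^2+\bigl(1+2e^{-2\alpha/(1-\alpha)}\bigr)k$, and minimizing over $\alpha$ yields $\varrho(k)$. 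As a sanity check, at $\alpha\to1$ the key estimate degenerates to $|a_3-a_2^2|\le k$, which is exactly the first Grunsky inequality for $\clS_k$, while at $\alpha\to0$ it reads $|a_3|\le 3k$, reducing for $k\to1$ to the classical bound $|a_3|\le3$ in $\clS$. This interpolation between the two extreme regimes is what the exponential factor encodes.

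To prove the key estimate I would embed $f\in\clS_k$ into the radial Loewner chain generated by its $k$-q.c.\ extension $F$, i.e.\ take the Riemann maps $g_t\colon\UD\to F(e^t\UD)$ normalized by $g_t'(0)=e^t$, so that $g_0=f$ and, as in~\eqref{EQ_f-limit}, the Herglotz function $p(\cdot,t)=1+p_1(t)z+p_2(t)z^2+\dots$ of the chain gives
\[
a_2=-\int_0^{\infty}p_1(t)e^{-t}\,\di t,\qquad a_3=a_2^2-\int_0^{\infty}p_2(t)e^{-2t}\,\di t .
\]
Hence $a_3-\alpha a_2^2=(1-\alpha)a_2^2-\int_0^{\infty}p_2(t)e^{-2t}\,\di t$. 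Splitting the last integral at the Loewner time $\tau=\alpha/(1-\alpha)$ produces a tail $\int_\tau^{\infty}p_2e^{-2t}\,\di t$ of size $O\!\bigl(e^{-2\tau}\bigr)=O\!\bigl(e^{-2\alpha/(1-\alpha)}\bigr)$, which is the origin of the exponential term; the contribution of $[0,\tau]$ is then to be balanced against $(1-\alpha)a_2^2$ using $|a_2|\le2k$.

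The main obstacle is precisely the bound on the $p_2$-integral: for a general $f\in\clS_k$ the chain $(g_t)$ need not satisfy Becker's condition~\eqref{EQ_Becker-condition} (indeed $\clSt{k}\neq\clS_k$), so the coefficients $p_j(t)$ are controlled only by the universal Carath\'eodory bound $|p_j(t)|\le2$, which alone loses the factor~$k$. To recover it I would truncate the extension: for each $\tau$ replace $F$ outside $\{|z|<e^\tau\}$ by the radial Becker gluing~\eqref{EQ_BeckerExt} built from the frozen boundary data of $g_\tau$, obtaining a competitor $f_\tau$ whose Herglotz function does satisfy~\eqref{EQ_Becker-condition} on $[0,\tau]$. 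The estimates entering the proof of Theorem~\ref{TH_a_3} then control the $[0,\tau]$-part by~$k$, while $|a_3(f)-a_3(f_\tau)|$ and $|a_2(f)-a_2(f_\tau)|$ are exactly the exponentially small tails. Verifying that this truncation is again a $k$-q.c.\ extension of a univalent map, and estimating the tails uniformly in $f$, is the technical heart of the argument.

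For the strict inequality $\max_{\clS_k^B}|a_3|<\max_{\clS_k}|a_3|$ I would show that the Becker extremiser is not extremal in the larger class. By Theorem~\ref{TH_a_3} equality in $\clS_k^B$ is attained only by rotations of $f_+$, whose extension has the explicit Beltrami coefficient~\eqref{EQ_extremal-Beltrami-coefficient}. Writing $G(w)=1/F(1/w)$ for the exterior map and using the exact representation
\[
a_3=a_2^2-\frac1\pi\iint_{\UD}\nu\,\partial G\,\di A,\qquad \nu=\bar\partial G/\partial G,\ \|\nu\|_\infty\le k,
\]
together with the companion formula $a_2=-\tfrac1\pi\iint_{\UD}\nu\,\partial G\,w^{-1}\,\di A$, I would compute the first variation of $|a_3|$ under admissible perturbations $\nu\mapsto\nu+\varepsilon\eta$. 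Extremality of $|a_3|$ over the ball $\|\nu\|_\infty\le k$ forces a Teichm\"uller condition $|\nu|=k$ a.e.\ with prescribed argument; since the coefficient~\eqref{EQ_extremal-Beltrami-coefficient} has $|\nu_+|<k$ on a set of positive measure, one can choose $\eta$ with $\tfrac{d}{d\varepsilon}|a_3|^2\big|_{0}>0$ while staying in $\clS_k$, producing a function in $\clS_k$ with $|a_3|>\max_{\clS_k^B}|a_3|$. The delicate point here is reducing the first variation to an integral of $\eta$ against a single holomorphic differential on $\UD$, so that the Teichm\"uller criterion can be read off and contrasted with~\eqref{EQ_extremal-Beltrami-coefficient}.
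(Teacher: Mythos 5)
Your skeleton for the right-hand inequality is the paper's own (the splitting $|a_3|\le|a_3-\alpha a_2^2|+\alpha|a_2|^2$, the bound $|a_2|\le 2k$, minimization over $\alpha$), but the key estimate $|a_3-\alpha a_2^2|\le\bigl(1+2e^{-2\alpha/(1-\alpha)}\bigr)k$ needs no Loewner-theoretic proof at all: it is the Fekete--Szeg\H{o} theorem, $\max_{\clS}|a_3-\alpha a_2^2|=1+2e^{-2\alpha/(1-\alpha)}$, combined with Lehto's Majorant Principle, since the functional $f\mapsto a_3-\alpha a_2^2$ is holomorphic on $\clS$ and vanishes at the identity map --- your own ``sanity checks'' (Grunsky at $\alpha\to1$, $|a_3|\le 3k$ at $\alpha\to0$) are exactly the signature of this scaling. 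Your substitute argument, the truncation, has a genuine gap that I do not think can be repaired as stated. Replacing $F$ outside $\{|z|<e^{\tau}\}$ leaves the domains $F(e^t\UD)$ for $0\le t\le\tau$, and hence the Herglotz function of the chain $(g_t)$ on $[0,\tau]$, completely unchanged; so the modification cannot make Becker's condition \eqref{EQ_Becker-condition} hold on $[0,\tau]$, which is precisely where you invoke it --- at best it affects $t>\tau$. Behind this lies a structural obstruction: the map glued via \eqref{EQ_BeckerExt} from the Riemann-map chain $(g_t)$ is \emph{not} $F$ (the image circles agree, the boundary parametrizations do not), so Remark~\ref{RM_2018} gives you nothing; worse, the Proposition in Section~\ref{SS_relation} exhibits an $f\in\clS_{1/\sqrt2}$ for which \emph{no} choice of radial parametrization makes the gluing built from such circle-image chains quasiconformal. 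Thus controlling $p_1,p_2$ beyond the Carath\'eodory bound by this route is hopeless, while Lehto's principle makes it unnecessary.

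For the strict inequality, your reduction is sound in outline --- it suffices to show the Becker extension of $f_+$ is not extremal, after which either your direct perturbation or (as in the paper) the strict monotonicity of $q\mapsto\max_{\clS_q}|a_3|$ finishes --- but your mechanism rests on a false statement. The Beltrami coefficient \eqref{EQ_extremal-Beltrami-coefficient} satisfies $|\mu|=k$ a.e.\ on \emph{all} of $\Delta$: since $\rho(k)$ is real, $\bigl|(\rho(k)+\bar z)/(\rho(k)+z)\bigr|=1$, and $z^4/|z|^4$, $z^3/|z|^3$ are unimodular; there is no positive-measure set with $|\nu_+|<k$, so the ascent direction you describe does not exist. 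What fails is not the modulus but the \emph{global} Teichm\"uller structure: on $1<|z|<\rho(k)$ one has $\mu=k\,\overline{\phi_1}/|\phi_1|$ with $\phi_1(z)=-(\rho(k)+z)^2/z^4$, on $|z|>\rho(k)$ one has $\mu=k\,\overline{\phi_2}/|\phi_2|$ with $\phi_2(z)=-z^{-3}$, and these pieces do not match across $|z|=\rho(k)$ into a single holomorphic $\phi$ on $\Delta$. Moreover, the principle ``extremality forces Teichm\"uller form'' is not valid: extremal Beltrami coefficients need not be of Teichm\"uller type, and the correct necessary condition is Hamilton--Krushkal, $\sup_\varphi|\Lambda(\varphi)|=1$. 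Refuting it requires excluding not only a norm-attaining $\varphi_*$ (this is where the non-Teichm\"uller form of $\mu$ enters, via \cite[p.\,161]{HarringtonOrtel}) but also \emph{degenerating} Hamilton sequences $\varphi_n\to0$ locally uniformly with $|\Lambda(\varphi_n)|\to1$; the paper kills this second alternative by expanding $\varphi_n$ in Laurent series on $\Delta$ and showing $\Lambda(\varphi_n)\to0$ through Cauchy estimates. Your first-variation sketch addresses neither alternative, so this half of the argument, as proposed, fails.
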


\begin{remark}\label{RM_Krushkal}
The sharp estimate in Theorem~\ref{TH_a_3} shows that the inequality ${|a_n|\le 2k/(n-1)}$ written in the larger class $\clS_k$ for $0<k\le 1/(1+n^2)$ and all $n=2,3,\ldots$ by Krushkal~\cite[Corollary on p.\,350]{Krushkal-a_n}, in fact, fails for~$n=3$. Note that the two estimates have tangency of infinite order at~$k=0$, while the difference from the r.h.s. of~\eqref{EQ_upper-est} behaves asymptotically as~$4k^2$. The three estimates are shown in Figure~\ref{FI_graph}.
\end{remark}
\begin{figure}[t]
\includegraphics[bb=0 0 543 333, width=9.5cm]{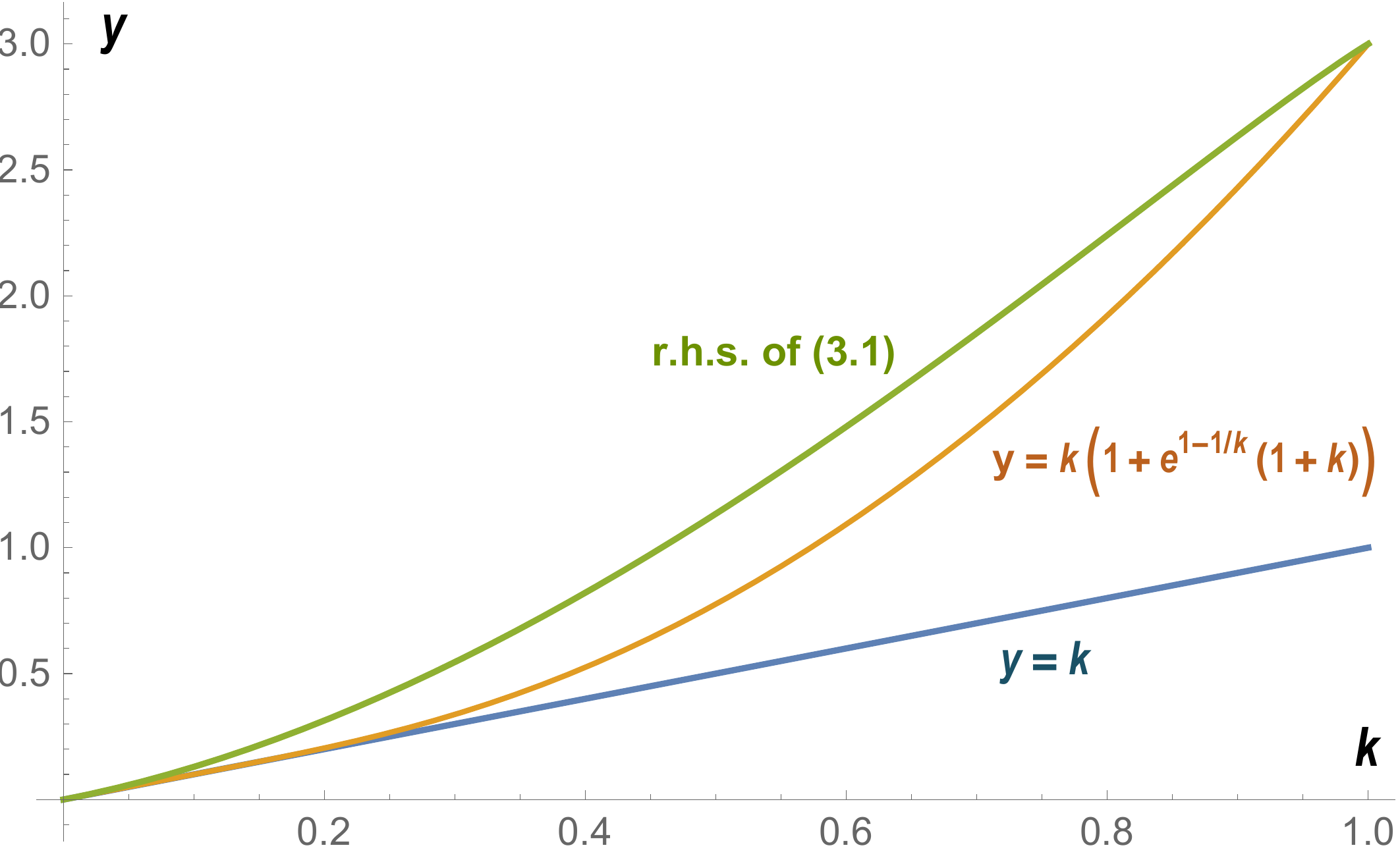}
\caption{Estimates for~$|a_3|$ mentioned in Remark~\ref{RM_Krushkal}.}\label{FI_graph}
\end{figure}
\vskip2ex
\begin{proof}[\fontseries{bx}\selectfont\protect{Proof of Theorem~\ref{TH_a_3}}]
The class $\clS_k^B$, ${k\in(0,1)}$, admits a Loewner-type parametric representation. Denote by $\clH_k$ the class of all normalized Herglotz functions $p$ satisfying $p(\UD,t)\subset U(k)$ for a.e.~$t\ge0$, where $U(k)$ is the closed disk defined in Theorem~\ref{TH_Becker}. As it follows from the very definition,  $\clS_k^B$~coincides with the image of the map
$$
\clH_k\ni p~\mapsto~f:=\lim_{t\to+\infty}e^{t}w(z,t)~\in\,\clS,
$$
where for each $z\in\UD$ the function $[0,+\infty)\ni t\mapsto w(z,t)\in\UD$ is defined as the unique solution to the initial value problem~\eqref{EQ_LK-ODE0}.
Write $p(z,t)=1+p_1(t)z+p_2(t)z^2+\ldots$ for all $z\in\UD$ and a.e.~$t\ge0$ and let
$$
f(z,t):=e^{t}w(z,t)=z+a_2(t)z^2+a_3(t)z^3+\ldots
$$
There is one-to-one correspondence between the class of all normalized Herglotz functions and~$\clH_k$. Indeed, $p\in\clH_k$ if and only if it can be written as $p(\cdot,t)=L\circ p_0(\cdot,t)$ for a.e.~$t\ge0$, where $p_0(z,t)=1+c_1(t)z+c_2(t)z^2+\ldots$ is an arbitrary normalized Herglotz function and $$L(z):=\frac{1+Kz}{K+z},\quad K:=\frac{1+k}{1-k},$$ is a conformal map of $\UH:=\{z\colon\Re z>0\}$ onto $U(k)$ with $L(1)=1$.

As usual, from~\eqref{EQ_LK-ODE0} we obtain the initial value problem for the coefficients $a_2$ and~$a_3$,
\begin{align}
\frac{\di a_2}{\di t}&=-e^{-t} p_1(t)=-ke^{-t} c_1(t), & a_2(0)=0,\label{EQ_phase1}\\[.7ex]
\frac{\di a_3}{\di t}&=-e^{-2t}p_2(t)-2e^{-t}p_1(t)a_2(t)\notag\\&=-k\Big(e^{-2t}\big(c_2(t)-(1-k)\frac{c_1(t)^2}{2}\,\big)+2e^{-t}c_1(t)a_2(t)\Big), & a_3(0)=0.\label{EQ_phase2}
\end{align}

Since along with any $f\in\clS_k^B$ the class $\clS_k^B$ contains all rotations of~$f$, i.e. the functions ${z\mapsto e^{i\theta}f(e^{-i\theta}z)}$, ${\theta\in\Real}$, the problem to determine $\max|a_3|$ in~$\clS_k^B$ is equivalent to
finding $~\max\,\Re a_3$. The latter problem can be reformulated as the optimal control problem for the above system and the objective functional $\Re a_3(+\infty)$, with a control function ${t\mapsto \big(c_1(t),c_2(t)\big)\in\C^2}$ regarded as admissible if it is measurable and for a.e.~$t\ge0$ satisfies
\begin{equation}\label{EQ_Cara}
 |c_1|\le 2,\qquad |2 c_2-c_1^2|\le 4-|c_1|^2.
\end{equation}
Conditions~\eqref{EQ_Cara} describe the value region of $\Cara\ni q\mapsto (c_1,c_2)\in\C^2$ over the Carath\'eodory class~$\Cara$ of all holomorphic functions $q(z)=1+c_1 z+c_2 z^2+\ldots$ in~$\UD$ with positive real part; see, e.g., \cite[Chapter IV, \S7]{Tsuji}.

To apply Pontryagin's Maximum Principle, we define the (holomorphic) Hamiltonian
$$
 H(a_2,a_3,\psi_2,\psi_3,t,c_1,c_2):=-ke^{-t} c_1\psi_2-k\Big(e^{-2t}\big(c_2-(1-k)\frac{c_1^2}{2}\big)+2e^{-t}c_1a_2\Big)\psi_3
$$
and write the adjoint system
\begin{align}
\frac{\di \psi_2}{\di t}&=-\frac{\partial H}{\partial a_2}=2ke^{-t}c_1(t)\psi_3(t),\label{EQ_adj1}\\[1.25ex]
\frac{\di \psi_3}{\di t}&=-\frac{\partial H}{\partial a_3}=0.\label{EQ_adj2}
\end{align}
The maximum of $\Re a_3(+\infty)$ is to be found among all the trajectories of \eqref{EQ_phase1},\,\eqref{EQ_phase2} satisfying the initial condition at~$t=0$, while the right-hand endpoint of the trajectories is variable. Therefore, according to Pontryagin's Maximum Principle, see \cite[Chapter~I, \S7, Theorem $3^*$]{Pontryagin}, if $c_1(t)=c_1^*(t)$, $c_2(t)=c_2^*(t)$ is an optimal control in our problem, then for the corresponding solution to the phase system~\eqref{EQ_phase1},\,\eqref{EQ_phase2} supplemented with the adjoint equations~\eqref{EQ_adj1},\,\eqref{EQ_adj2} and  the transversality conditions
\begin{equation}\label{EQ_trans}
\psi_2(+\infty)=0,\qquad \psi_3(+\infty)=1,
\end{equation}
it holds that
\begin{multline}\label{EQ_Max-principle}
\max_{(c_1,c_2)} \Re H\big(a_2(t),a_3(t),\psi_2(t),\psi_3(t),t,c_1,c_2\big)\\ = \Re H\big(a_2(t),a_3(t),\psi_2(t),\psi_3(t),t,c^*_1(t),c^*_2(t)\big),
\end{multline}
where the maximum is taken over all~$(c_1,c_2)\in\C^2$ subject to conditions~\eqref{EQ_Cara}.

System~\eqref{EQ_adj1}\,--\,\eqref{EQ_trans} can be integrated using integrals to~\eqref{EQ_phase1},\,\eqref{EQ_phase2}:
\begin{equation}\label{EQ_adjoint}
\psi_2(t)=a-2a_2(t),\quad \psi_3(t)=1,
\end{equation}
where $a:=2a_2(+\infty)$.

To find the maximum of $\Re H$ as a function of~$c_1$ and~$c_2$, we first fix a~$c_1\in\C$ with~$|c_1|\le2$ and optimize $\Re H$ in the disk described by the second of the inequalities in~\eqref{EQ_Cara}. The maximum is achieved for ${c_2=c^*_2:=(\Re c_1)^2+i\,\Re c_1\,\Im c_1-2}$. For this value of~$c_2$ and taking into account~\eqref{EQ_adjoint}, we get
\begin{align}
-\frac{e^{2t}}{k}\,\Re H&=e^t\,\Re(ac_1)+\frac{1+k}{2}\,c_1'^{\,2}+\frac{1-k}{2}\,c_2''^{\,2}-2\notag\\%
                        &=\,\frac{1+k}{2}\left(c_1'+\frac{e^ta'}{1+k}\right)^{\!\!2}\,%
                            +\,\frac{1-k}{2}\left(c_1''-\frac{e^ta''}{1-k}\right)^{\!\!2}\,+\,C,\label{EQ_quadratic}
\end{align}
where $a=:a'+ia''$, $c_1=:c_1'+ic_1''$, and $C$ is a quantity independent of~$c_1$. The absolute minimum of~\eqref{EQ_quadratic} is achieved at $c_1^\star:=e^t\big(-a'/(1+k)+ia''/(1-k)\big)$. Moreover, even if $|c_1^\star|>2$, the minimum point~$c_1^*$ of~\eqref{EQ_quadratic} over the disk $|c_1|\le 2$ still satisfies
\begin{equation}\label{EQ_sgn}
 \sgn\Re c_1^*=-\sgn a',\qquad \sgn\Im c_1^*=\sgn a'',
\end{equation}
where $\sgn x:=x/|x|$ for $x\in\Real\setminus\{0\}$ and $\sgn 0:=0$.
For the optimal trajectory, according to~\eqref{EQ_phase1}, we have
\begin{equation}\label{EQ_a}
 a=-2k\int_0^{+\infty}\!\!e^{-t}c_1^*(t)\,\di t,
\end{equation}
which would contradict~\eqref{EQ_sgn} whenever $a''\neq0$. Therefore, $a$ is real and
\begin{equation}\label{EQ_c1}
c_1^*(t)=
\begin{cases}
-e^ta/(1+k),& \text{if~}\big|e^ta/(1+k)\big|<2,\\
-2\sgn a,&      \text{otherwise}.
\end{cases}
\end{equation}

Consider two cases. First suppose that $a=0$. Then $c_1^*(t)=0$ and $c_2^*(t)=-2$ for all~$t\ge0$. Note that for~$(c_1,c_2)=(c_1^*,c_2^*)$ in~\eqref{EQ_Cara}, the first condition is satisfied with the strict inequality sign, while in the second condition equality occurs.
Therefore, see, e.g., \cite[Theorem~IV\!.\,23]{Tsuji},
\begin{align*}
 p_0(z,t)&=\lambda\frac{1+\mu_1 z}{1-\mu_1 z}+(1-\lambda)\frac{1+\mu_2 z}{1-\mu_2 z}\\
          &=1\,+\,2(\lambda\mu_1+(1-\lambda)\mu_2)z\,+\,2(\lambda\mu_1^2+(1-\lambda)\mu_2^2)z^2\,+\,\ldots,\quad z\in\UD,
\end{align*}
for some constants $\lambda\in(0,1)$ and $\mu_1\neq\mu_2$ on the unit circle (possibly depending on~$t$). Comparing the coefficients of $z$ and $z^2$ with $c_1^*$ and~$c_2^*$, we conclude that ${\lambda=1/2}$, ${\mu_{1,2}=\pm i}$, and hence $p(z,t)=(1-kz^2)/(1+kz^2)$. The corresponding function $f\in\clS_k^B$ is $f(z)=f_1(z):=z/(1-kz^2)$, with $a_3|_{f=f_1}=k$.

Now suppose that $a\neq0$. Denote $t_0:=\max\big\{0,\,\log|2(1+k)/a|\big\}$. Then according to~\eqref{EQ_c1}, $c_1^*(t)=-e^ta/(1+k)$ whenever ${0\le t<t_0}$, and $c_1^*(t)=-2\sgn a$ for all $t\ge t_0$. Substituting $c_1(t):=c_1^*(t)$ into~\eqref{EQ_a}, we get
$$
 a=2k\left(\frac{a t_0}{1+k}+2e^{-t_0}\sgn a\right)=\frac{2k(1+t_0)}{1+k}a.
$$
It follows that $t_0=(1-k)/(2k)$ and
\begin{equation}\label{EQ_a2}
a_2(+\infty)=a/2=\pm\alpha(k),\quad \text{where~$~\alpha(k):=(1+k)e^{-t_0}$.}
\end{equation}
Using~\eqref{EQ_phase1} and~\eqref{EQ_phase2}, we obtain
$$
a_3(+\infty)~=~a_2(+\infty)^2-\displaystyle\int\limits_0^{+\infty}\!\! e^{-2t}p^*_2(t)\,\di t,
$$
where $p_2^*(t)$ is the value of $p_2$ that corresponds to $\big(c_1,c_2\big)=\big(c_1^*(t),c_2^*(t)\big)$. Elementary calculations yield $p_2^*(t)=2k\big(e^{2(t-t_0)}(1+k)-1\big)$ when ${0\le t\le t_0}$, $p_2^*(t)=2k^2$ for all~${t\ge t_0}$, and hence $$a_3(+\infty)=k\big(1+e^{1-1/k}(1+k)\big)>k=a_3|_{f=f_1}.$$

This gives the maximal value of $\Re a_3$ (and hence of~$|a_3|$) in~$\clS_k^B$.
There are two extremal functions for~$\Re a_3$, which we denote by $f_\pm$, corresponding to two possible choices of the sign in~\eqref{EQ_a2}. Since  $z\mapsto -f(-z)$ has the same coefficient~$a_3$ as~$f$, it is clear that $f_{-}(z)=-f_+(-z)$, and the set of all extremal functions for~$|a_3|$ coincides with the rotations of~$f_+$. Therefore, we may assume the sign ``$+$'' in~\eqref{EQ_a2}. Then the same method as in case~$a=0$ allows us to write down the corresponding Heglotz function explicitly,
$$
p(z,t)=\frac{1-k z^2+(1-k)e^{t-t_0}z}{1+kz^2+(1+k)e^{t-t_0}z}~\text{~for $t\in[0,t_0]$}\quad
\text{and}\quad p(z,t)=\frac{1 - kz}{1 + kz}~\text{~for~$t\ge t_0$}.
$$

Unfortunately, it does not seem possible to get an explicit formula for the extremal function~$f_+$ and the Loewner chain generated by the above Herglotz function. However, one can find the Beltrami coefficient of the Becker extension provided by this Loewner chain, see e.g. \cite[Proof of Theorem~2]{Istvan},
\begin{equation}\label{EQ_extremal-Beltrami-coefficient}
\mu(z)=\frac{z^2}{|z|^2}\,\frac{p\big(z/|z|,\log |z|\big)-1}{p\big(z/|z|,\log |z|\big)+1}=
\begin{cases}
-k\dfrac{z^4}{|z|^4}\,\dfrac{\rho(k) + \bar z}{\rho(k) + z},&\text{if $|z|\in\big(1,\,\rho(k)\big)$,}\\[2.75ex]
-k\dfrac{z^3}{|z|^3}, & \text{if $|z|>\rho(k)$,}
\end{cases}
\end{equation}
where $\rho(k):=e^{t_0}=\exp\big((1/k-1)/2\big)$.
\end{proof}

\begin{proof}[\fontseries{bx}\selectfont\protect{Proof of Theorem~\ref{TH_not-extremal}}]
Note that $|a_3|\le|a_3-\alpha a_2^2|+\alpha |a_2|^2$ for any $\alpha\in(0,1)$. The inequality $\max_{\clS_k}|a_3|\le\varrho(k)$ follows therefore from  the Fekete\,--\,Szeg\H{o} Theorem, see e.g. \cite[p.\,104]{Duren}, the well-known estimate~$|a_2|\le 2$ for the class~$\clS$, and  Lehto's Majorant Principle~\cite{Lehto_Majorant}.

To show that the maximum of $|a_3|$ in $\clS_k$ is strictly greater than in~$\clS_k^B$, fix $k\in(0,1)$ and note that for any non-constant holomorphic functional $\Phi:\clS\to\Complex$, according to Lehto's Majorant Principle, the function $q\mapsto\max_{\clS_q}|\Phi|$ is \emph{strictly} increasing. It follows that the extremal functions in the problem $|\Phi|\to\max_{\clS_k}$ do not belong to~$\clS_q$ whenever~${q<k}$. Therefore, to complete the prove, it would be sufficient to show that the Becker q.c.-extension of the function~$f_+$ from Theorem~\ref{TH_a_3} whose Beltrami coefficient is given by~\eqref{EQ_extremal-Beltrami-coefficient} is not extremal, i.e. that $f_+$ admits a $q$-q.c. extension to~$\Complex$ with some $q\in(0,k)$.

Suppose on the contrary that the above mentioned Becker extension of~$f_+$ is extremal. Then it would satisfy the Hamilton\,--\,Krushkal condition~\cite[Theorem~1]{Hamilton}, see also~\cite{Krushkal-extremal,HarringtonOrtel}, which can be formulated as $~\sup_\varphi\big|\Lambda(\varphi)\big|=1$, where
$$
\Lambda(\varphi):={\frac{1}{k}\iint\limits_\Delta\varphi(z)\,\mu(z)\,\di x\di y},\quad \Delta:=\{z:1<|z|<+\infty\},
$$
$\mu$ is given by~\eqref{EQ_extremal-Beltrami-coefficient}, and the supremum is taken over all holomorphic differentials $\varphi(z)\di z^2$ in $\Delta$ with $\|\varphi\|:=\iint_\Delta|\varphi(z)|\di x\di y\,\le\,1$. Note that $\varphi(z)\di z^2$ does not have to be holomorphic at~$\infty$, because the q.c.-extensions of $f_+$ that we consider are required to fix~$\infty$.

The results of~\cite[\S3]{HarringtonOrtel} can be extended without any trouble from~$\UD$ to~$\Delta$. In particular, by~\cite[Proposition~3.2]{HarringtonOrtel}, either $\big|\Lambda(\varphi_*)\big|=1$  for some~$\varphi_*$ with $\|\varphi_*\|=1$ or $\big|\Lambda(\varphi_n)\big|\to1$ as $n\to+\infty$ for some sequence $(\varphi_n)$ with $\|\varphi_n\|\le1$ converging locally uniformly in~$\Delta$ to zero. On the one hand, the former possibility does not hold in our case, because~$\mu(z)$ is not of the form $k\overline{\phi(z)}/|\phi(z)|$, where $\phi$ is holomorphic, see~\cite[p.\,161]{HarringtonOrtel}. On the other hand, in terms of the Laurent development $\varphi_n(z)=\sum_{m=3}^{+\infty}c_{n,m}z^{-m}$, we have
$$
\frac{\Lambda(\varphi_n)}{2\pi}=\frac{1+\log\rho(k)}{\rho(k)}c_{n,3}~+~\frac{\rho(k)^2-2\log\rho(k)-1}{2}~\sum_{m=4}^{+\infty}
\frac{(-1)^m c_{n,m}}{\rho(k)^{m-2}}~\text{~}\longrightarrow~0~\text{~}\text{as~}~n\to+\infty,
$$
because for a fixed $r\in\big(1,\rho(k)\big)$ the Cauchy estimates give $|c_{n,m}|\le r^{m}\max_{|z|=r}|\varphi_n(z)|$.
We obtained a contradiction, which shows that $f_+$ has a $q$-q.c. extension to~$\C$ with~${q\in(0,k)}$, and hence the proof is complete.
\end{proof}

\section{Extremal Becker extensions}\label{SS_examples}
Recall that a q.c.-extension~$F:\C\to\C$ of a function~$f\in\clS$ is called \textsl{extremal}, if for any q.c.-extension~$G:\Complex\to\Complex$ of~$f$ we have $\esssup_{|z|>1}|\mu_G(z)|\ge\esssup_{|z|>1}|\mu_F(z)|$, where $\mu_G$ and $\mu_F$ stand for the Beltrami coefficients of $G$ and $F$, respectively. If the equality occurs in the above inequality only for~$G=F$, then $F$ is said to be the \textsl{uniquely extremal} q.c.-extension of~$f$ to~$\Complex$.

There is a simple sufficient condition for a q.c.-extension~$F:\C\to\C$ to be uniquely extremal.  A \textsl{(regular) Teichm\"uller mapping} of a domain~$D$ is a q.c-mapping $F:D\to\ComplexE$ such that $\mu_F(z)=k\overline{\varphi(z)}/{|\varphi(z)|}$ for a.e. $z\in D$, where $k\in(0,1)$ and ${\varphi(z)\,\di z^2}$, ${\varphi\not\equiv0}$, is a holomorphic quadratic differential in~$D$. It is known that~\cite[Theorem~4]{Strebel:1978} if a q.c.-extension of $f\in\clS$ to~$\C$ is Teichm\"uller on~$\Delta:=\C\setminus\overline\UD$ with $\varphi$  satisfying $\|\varphi\|:={\iint_\Delta|\varphi(z)|\,\di x\di y<+\infty}$, then $F$ is uniquely extremal.

\begin{remark}
If $\varphi$ is holomorphic in~$\Delta$ and has a zero of order at least four at~$\infty$, then a q.c.-map of $\Delta$ with the Beltrami coefficient $k\overline\varphi/|\varphi|$ is a Teichm\"uller mapping of the \textit{simply connected} domain~$\ComplexE\setminus\overline\UD$. For this case, certain conditions weaker than $\|\varphi\|<+\infty$ are sufficient for (unique) extremality, see e.g. \cite{Huang, YaoF, YaoP} and references therein.
\end{remark}

Using the above mentioned sufficient condition, we construct a quite large family of functions $f\in\clS_k$ with uniquely extremal extensions obtained via Becker's construction. The idea comes from the following example. Consider the function ${f_\sigma\in\clS}$, ${\sigma\in(0,2)}$, obtained by composing $\UH:=\{z\colon \Re z>0\}\ni \zeta\mapsto\zeta^\sigma$, $1\mapsto 1$, with suitable Moebius transformations. This function admits a unique $|\sigma-1|$\,-\,q.c. extension $F_\sigma:\C\to\C$ and belongs to~$\clSt{|\sigma-1|}$, see \cite[Example~2]{Istvan}.
The Beltrami coefficient of~$F_\sigma$ is $\mu(z)={(\sigma-1)\overline{\varphi(z)}/|\varphi(z)|}$, ${\varphi(z):=1/(z^2-1)^2}$, for all $z\in\Delta$, which can be written as $\mu(\rho\zeta)=\zeta^2\,\psi_\rho(\zeta)$ for all $\rho>1$ and $\zeta\in\partial\UD$, where $\psi_\rho(\zeta):=(\sigma-1)(\zeta^2-1/\rho^2)/(1-\zeta^2/\rho^2)$. The latter means that $F$ is Becker's q.c.-extension~\eqref{EQ_BeckerExt} with the Herglotz function $p(z,t):=\big(1-\psi_{e^t}(z)\big)/\big(1+\psi_{e^t}(z)\big)$. Note that, up to the factor~$(\sigma-1)$, $\psi_\rho$ is a Blaschke product. It turns out that any finite Blaschke product gives rise to a similar example.

\begin{proposition}\label{PR_examples}
 Let $k\in(0,1)$, $n\in\Natural$, $a_1,\ldots, a_n\in\UD$, $\alpha\in\Real$. Then the Herglotz function
 \begin{equation}\label{EQ_examples_p}
  p(z,t):=\frac{1+k\psi_t(z)}{1-k\psi_t(z)},~\text{~where~}~\psi_t(z):=e^{i\alpha}\, \prod_{j=1}^n\frac{z-e^{-t}a_j}{1-e^{-t}\,\overline a_j z},\quad z\in\UD,~t\ge0,
 \end{equation}
 satisfies Becker's condition~\eqref{EQ_Becker-condition} and formula~\eqref{EQ_BeckerExt} defines a uniquely extremal $k$-q.c. extension of the function~$f\in\clS$ generated by~$p$. In particular, $f\in\clSt{k}\setminus\clS_k^B$ if $a_k\neq0$ for all $k=1,\ldots, n$; otherwise, $f\in\clS_k^B$.
\end{proposition}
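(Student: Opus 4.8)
The plan is to split the verification into three parts: (i) that $p$ is a Herglotz function meeting Becker's condition~\eqref{EQ_Becker-condition}, so that $f\in\clSt{k}$ by Theorem~\ref{TH_Becker}; (ii) that the Becker extension~\eqref{EQ_BeckerExt} is a Teichm\"uller mapping of $\Delta:=\C\setminus\cUD$ with an integrable quadratic differential, hence uniquely extremal; and (iii) the dichotomy between $\clS_k^B$ and $\clSt{k}\setminus\clS_k^B$. For~(i), the key observation is that $\tfrac{p-1}{p+1}=k\psi_t$, so that, since each factor of $\psi_t$ is a Blaschke factor with zero $e^{-t}a_j\in\UD$ and pole $e^t/\overline a_j$ outside $\cUD$, the map $\psi_t$ is $e^{i\alpha}$ times a finite Blaschke product; thus $|\psi_t|\le1$ on $\UD$ and $\bigl|\tfrac{p-1}{p+1}\bigr|\le k$, which is exactly~\eqref{EQ_Becker-condition}. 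The same formula shows that $p(\cdot,t)$ is holomorphic with $\Re p\ge0$, and since $\psi_t(0)=e^{i\alpha}(-1)^ne^{-nt}\prod_ja_j\to0$ we have $\Re p(0,t)\to1$, so~\eqref{EQ_weaker-normalization-for-p} holds and Theorem~\ref{TH_Becker} applies.

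For~(ii), I would insert the identity $\tfrac{p-1}{p+1}=k\psi_t$ into the general formula for the Beltrami coefficient of a Becker extension (as in the derivation of~\eqref{EQ_extremal-Beltrami-coefficient}, see~\cite[Proof of Theorem~2]{Istvan}), obtaining $\mu(z)=k\tfrac{z^2}{|z|^2}\psi_{\log|z|}(z/|z|)$ for $|z|>1$. A direct simplification, using $|z|^2=z\overline z$ to rewrite each boundary Blaschke factor, turns this into the Teichm\"uller form $\mu(z)=k\,\overline{\varphi(z)}/|\varphi(z)|$ with
\[
  \varphi(z)=e^{-i\alpha}\,\frac{z^{\,n-2}}{\prod_{j=1}^n(z-a_j)^2},
\]
which is holomorphic and non-vanishing on $\Delta$ because the $a_j$ lie in $\UD$. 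Since $|\varphi(z)|\sim|z|^{-n-2}$ as $z\to\infty$, the norm $\|\varphi\|=\iint_\Delta|\varphi|\,\di x\di y$ is finite for every $n\ge1$; the sufficient condition of Strebel~\cite[Theorem~4]{Strebel:1978} recalled in Section~\ref{SS_examples} then yields that $F$ is uniquely extremal.

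The dichotomy in~(iii) is where the real work lies. If some $a_j=0$, then $\psi_t(0)\equiv0$ and hence $p(0,t)\equiv1$: the representing $p$ is already normalized, so $p\in\clH_k$ and $f\in\clS_k^B$ by definition. If, on the contrary, every $a_j\ne0$, then $\psi_t(0)\ne0$ and $p(0,t)\ne1$, so this particular $p$ is not normalized; I must rule out the existence of \emph{any} normalized Becker representation of $f$. The plan is a rigidity argument built on the unique extremality from~(ii). Suppose $f\in\clS_k^B$. Then $f$ is generated by some normalized $\widetilde p\in\clH_k$, and Theorem~\ref{TH_Becker} turns $\widetilde p$ into a $k$-q.c.\ extension $\widetilde F$ of $f$. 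Being a $k$-q.c.\ competitor, $\widetilde F$ has $\esssup_{|z|>1}|\mu_{\widetilde F}|\le k=\esssup_{|z|>1}|\mu_F|$; unique extremality of $F$ forces equality and $\widetilde F=F$, whence $\mu_{\widetilde F}=\mu_F$ a.e.\ on~$\Delta$.

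Finally I would recover the Herglotz functions from these equal Beltrami coefficients. Since $w\mapsto(w-1)/(w+1)$ is injective and the Becker formula expresses $\mu_F$ through the boundary values $p(\zeta,t)$, $\zeta\in\partial\UD$, the equality $\mu_{\widetilde F}=\mu_F$ gives $\widetilde p(\zeta,t)=p(\zeta,t)$ for a.e.\ $\zeta\in\partial\UD$ and a.e.\ $t\ge0$. For each such $t$ the functions $p(\cdot,t)$ and $\widetilde p(\cdot,t)$ are holomorphic in $\UD$ with non-negative real part, hence of bounded type; having the same non-tangential boundary values a.e., they coincide in~$\UD$. In particular $p(0,t)=\widetilde p(0,t)=1$, contradicting $p(0,t)\ne1$. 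Thus $f\notin\clS_k^B$, i.e.\ $f\in\clSt{k}\setminus\clS_k^B$. The main obstacle is precisely this last rigidity step: one must check that $\widetilde F$ is a genuine extremal competitor (so that $\widetilde F=F$ really follows) and that equality of boundary values a.e.\ propagates to equality of the Herglotz functions in the interior, which is where the positivity of $\Re p$ and the uniqueness theorem for functions of bounded type are essential.
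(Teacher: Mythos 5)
Your proof is correct and follows essentially the same route as the paper's: Becker's condition via the Blaschke-product bound $|k\psi_t|\le k$, the identical Beltrami-coefficient computation putting $\mu_F$ in Teichm\"uller form with $\varphi(z)=e^{-i\alpha}z^{n-2}\prod_{j=1}^n(z-a_j)^{-2}$, $\|\varphi\|<+\infty$, and Strebel's theorem for unique extremality. The only divergence is at the final dichotomy, where the paper simply remarks that the given $p$ satisfies $p(0,t)=1$ only when some $a_j=0$, while you correctly observe that excluding $f\in\clS_k^B$ requires ruling out \emph{every} normalized representation and supply the needed rigidity argument (unique extremality forces any normalized Becker extension $\widetilde F$ to coincide with $F$, whence $\widetilde p=p$ by equality of boundary values and the uniqueness theorem for bounded holomorphic functions) --- a sound and worthwhile expansion of a step the paper leaves implicit.
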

\begin{proof}
Condition~\eqref{EQ_Becker-condition} holds trivially because for all~$t\ge0$, $\psi_t$ is a Blaschke product.
We can find the Beltrami coefficient of the $k$-q.c. extension~$F$ given by~\eqref{EQ_BeckerExt}, see e.g.~\cite[\S4]{Istvan},
$$
\mu_F(\rho\zeta)=\frac{p(\zeta,\log\rho)-1}{p(\zeta,\log\rho)+1}\,\zeta^2\,= \,k\frac{\,\overline{\varphi(\rho\zeta)}\,}{|\varphi(\rho\zeta)|},
~\text{~where~} \varphi(z):={e^{-i\alpha}z^{n-2}}\,\prod_{j=1}^n\frac{1}{(z-a_j)^2},~ z\in\Delta,
$$
for all~$\rho>1$ and $\zeta\in\partial\UD$. Hence $F|_{\C\setminus\overline\UD}$ is a Techm\"uller mapping. Moreover, it is easy to see that~$\|\varphi\|<+\infty$. Therefore, $F$ is the uniquely extremal q.c.-extension of~$f$ to~$\C$.

To complete the proof it remains to notice that the normalization ${p(0,t)=1}$ for a.e.~${t\ge0}$ holds only if at least one of the points~$a_k$ coincides with the origin.
\end{proof}

\begin{remark}
Recently, using the generalization of Becker's construction due to Betker~\cite{Betker}, Sugawa~\cite{Sugawa2019} established a sufficient condition for a Beltrami coefficient in~$\UD$ to be \hbox{\textsl{trivial},} i.e. to be the Beltrami coefficient of some q.c.-automorphism of~$\UD$ whose continuous extension to~$\overline\UD$ coincides on~$\partial\UD$ with the identity map.
There is a natural one-to-one correspondence between Beltrami coefficients~$\nu\in L^{\infty}(\UD)$ satisfying Sugawa's condition and Becker's q.c.-extensions. In particular, the $k$-q.c. extension of~$f$ defined in Proposition~\ref{PR_examples}  corresponds to~$\nu(e^{-t}\zeta)= k\zeta^2\psi_t(\zeta)= k\,\phi(e^{-t}\zeta)/|\phi(e^{-t}\zeta)|$ for all $t>0$ and $\zeta\in\partial\UD$, where~ $\phi(z):=e^{i\alpha}z^{n+2}\big/\prod_{j=1}^n(1-\overline a_j z)^2$, $z\in\UD$. This resembles Teichm\"uller mappings except that $\phi(z)$ in the numerator does not carry conjugation.
\end{remark}

\section{Relation between classes $\clS_k$ and $\clS_k^B$}\label{SS_relation}
Although $\clS_k^B$ represents only a part of $\clS_k$, see e.g.~\cite[\S5]{Istvan}, it is plausible to believe that Becker extendible mappings should have yet undiscovered but essential role for the study of conformal mappings admitting quasiconformal extensions.

First of all, functions of the form $f_n(z):=z/(1-ke^{-i\theta}z^n)^{2/n}$, $n\in\Natural$, $\theta\in\Real$, seem to play an important role in  extremal problems for~$\clS_k$, similar to that of the Koebe function $f(z):=z/(1-z)^2$ for the whole class~$\clS$. In fact, $f_1$ and $f_2$ are to known to be extremal in some classical problems, see e.g. \cite{Kuhnau69,Lehto71}.  It is not difficult to see that $f_n\in\clS_k^B$ for all~$n\in\Natural$. Moreover, according to~Proposition~\ref{PR_examples}, there is an infinite family of functions $f\in \clS_k$ for which the uniquely extremal quasiconformal extension to~$\Complex$ is a Becker extension and hence $f\in\clS_k^B\,\big\backslash\bigcup_{0<\nu<k}\,\clS_{\nu}$.

Secondly, there exists $k_*\in(0,1]$ such that for any~$k\in(0,k_*)$ we have $\clS_k\subset\clS_q^B$ with some $q\in(0,1)$ depending only on~$k$. In fact, it is easy to see that $k_*\ge1/6$. Indeed, on the one hand, $|f''(z)/f'(z)|\le 6(1-|z|^2)$ for all $z\in\UD$ and any $f\in\clS$, see e.g. \cite[Ch.\,II,\,\S4, ineq.\,(6)]{Goluzin}, with $6$ replaced by $6k$ if $f\in\clS_k$ thanks to Lehto's Majorant Principle, see e.g. \cite[\S22]{Lawrynowicz}. On the other hand, if a holomorphic function $f:\UD\to\Complex$ satisfies $|f''(z)/f'(z)|\le k(1-|z|^2)$ for all $z\in\UD$, then $f\in\clS_k^B$, see \cite[Satz~4.1]{Becker72}.

We are able to improve slightly the estimate $k_*\ge 1/6$, see Corollary~\ref{CR_AW-Becker0}. In this connection, it is natural to put forward the following problem.
\begin{problem}\label{PRO_strong}
Find $k_*$. In particular, is it true that $k_*=1$, i.e. that for any $k\in(0,1)$ there exists $q\in(0,1)$ such that $\clS_k\subset\clS_q^B$?
\end{problem}

It seems interesting to consider also a bit weaker version of the latter question.
\begin{problem}\label{PRO_weak}
Is it true that for any function $f\in\clS$ admitting a q.c.-extension to~$\Complex$, there exists~$q\in(0,1)$, possibly depending on~$f$, such that $f\in\clS_q^B$?
\end{problem}

Note that it is possible to replace $\clS_k^B$ with $\clSt{k}$ in the above problems as the following proposition shows.
\begin{proposition}
For any $k\in(0,1)$, $\clSt{k}\subset\clS^B_{\kappa(k)}$, where $\kappa(k):=2k/(1+k^2)$.
\end{proposition}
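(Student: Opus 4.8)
The plan is to start from an arbitrary (non-normalized) Herglotz representation of $f\in\clSt{k}$ and manufacture from it a genuinely normalized one that still represents $f$ and whose values lie in $U(\kappa(k))$. The passage between the two is a \emph{time change combined with a rotation of the chain}, and the final containment will follow from a one-line hyperbolic triangle inequality.

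So let $f$ be generated by a Herglotz function $p$ with $p(\UD,t)\subset U(k)$ for a.e.~$t$, and put $p_0(t):=p(0,t)$. Becker's condition forces $p_0(t)\in U(k)\subset\UH$, so $\Re p_0(t)\in\big[\tfrac{1-k}{1+k},\tfrac{1+k}{1-k}\big]$; in particular $\Re p_0$ is bounded away from $0$ and $\infty$ and $\int_0^{+\infty}\Re p_0\,\di t=+\infty$. First I would set
\[
\tau(t):=\int_0^t\Re p_0(s)\,\di s,\qquad \Gamma(t):=\int_0^t\Im p_0(s)\,\di s,
\]
and let $t(\tau)$ invert the increasing, bi-Lipschitz, onto map $\tau(t)$. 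With $w$ the solution of~\eqref{EQ_LK-ODE0} for $p$, I would define (writing $t=t(\tau)$)
\[
\tilde w(z,\tau):=e^{i\Gamma(t)}\,w(z,t),\qquad
\tilde p(z,\tau):=\frac{p\big(e^{-i\Gamma(t)}z,\,t\big)-i\,\Im p_0(t)}{\Re p_0(t)}.
\]
A direct differentiation (chain rule plus the identity $w'(0,t)=\exp(-\int_0^t p_0)$) shows that $\tilde w$ solves~\eqref{EQ_LK-ODE0} with Herglotz function $\tilde p$, that $\tilde p(0,\tau)\equiv1$, and that $\tilde w'(0,\tau)=e^{-\tau}$; moreover the factor $e^{i\Gamma}$ cancels in the ratio, so $\tilde w(z,\tau)/\tilde w'(0,\tau)=w(z,t)/w'(0,t)$ and the limit~\eqref{EQ_f-limit} built from $\tilde w$ is again $f$. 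Thus $\tilde p$ is a normalized Herglotz function representing $f$; its regularity (measurability and local integrability in $\tau$, holomorphy and $\Re\tilde p\ge0$ in $z$) is inherited, the time change being bi-Lipschitz and the values lying in $U(\kappa(k))\subset\UH$ once the last step is done.

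The crux is to check $\tilde p(\UD,\tau)\subset U(\kappa(k))$ for a.e.~$\tau$. Since $z\mapsto e^{-i\Gamma}z$ is an automorphism of~$\UD$, the values of $\tilde p(\cdot,\tau)$ fill exactly $N_t\big(U(k)\big)$, where $N_t(w):=\big(w-i\Im p_0(t)\big)/\Re p_0(t)$. Being a vertical translation followed by a real dilation, $N_t$ is a hyperbolic isometry of~$\UH$, and by construction $N_t\big(p_0(t)\big)=1$. Now $U(k)$ is the hyperbolic disk about~$1$ of radius $R_k:=\log\tfrac{1+k}{1-k}$: it is a Euclidean disk inside~$\UH$, hence a hyperbolic disk, and the isometry $w\mapsto1/w$ fixes~$1$ while interchanging its real diameter endpoints $\tfrac{1-k}{1+k}$ and $\tfrac{1+k}{1-k}$, so its hyperbolic center is~$1$. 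Consequently $N_t(U(k))$ is the hyperbolic disk of radius $R_k$ about $N_t(1)$, and
\[
d_{\UH}\big(1,N_t(1)\big)=d_{\UH}\big(N_t(p_0),N_t(1)\big)=d_{\UH}\big(p_0,1\big)\le R_k,
\]
whence the triangle inequality yields $N_t(U(k))\subset B_{\UH}(1,2R_k)$. Finally $B_{\UH}(1,2R_k)=U(\kappa(k))$, since the real endpoints of $U(\kappa(k))$ are $\tfrac{1\pm\kappa}{1\mp\kappa}=\big(\tfrac{1+k}{1-k}\big)^{\pm2}$, which sit at hyperbolic distance $2R_k$ from~$1$. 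This proves $\tilde p(\UD,\tau)\subset U(\kappa(k))$ and hence $f\in\clS^B_{\kappa(k)}$.

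I expect the main obstacle to be organizational rather than deep: verifying cleanly that the time-change-plus-rotation sends a solution of~\eqref{EQ_LK-ODE0} to another one with the stated normalized $\tilde p$ and unchanged limit, and confirming the inherited measurability/integrability. The geometric containment is short once phrased hyperbolically; equivalently one may verify it through the elementary Euclidean criterion $|c_1-c_2|+\rho_1\le\rho_2$ for the two disks, where equality occurs precisely when $p_0(t)$ lies at the top or bottom of $\partial U(k)$, which incidentally shows that the constant $\kappa(k)$ cannot be lowered by this construction.
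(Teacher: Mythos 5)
Your proposal is correct and essentially reproduces the paper's proof: your $\tau(t)$, $\Gamma(t)$, $\tilde w$, $\tilde p$ are exactly the paper's $Q(t)=\int_0^t p(0,s)\,\di s$, $\omega(\tau)=e^{i\Im Q(t)}w(t)$ and $p_0=L_t\circ p$, and your half-plane triangle inequality $d_{\UH}\bigl(1,N_t(1)\bigr)\le R_k$ is the paper's observation---transported through $H(\zeta)=(\zeta-1)/(\zeta+1)$---that an automorphism of~$\UD$ sending a point of $k\overline\UD$ to~$0$ maps $k\overline\UD$ into $\kappa\overline\UD$ with $\kappa=2k/(1+k^2)$. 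One incidental slip, harmless to the proof: internal tangency of $N_t\bigl(U(k)\bigr)$ to $\partial U\bigl(\kappa(k)\bigr)$ occurs whenever $p(0,t)\in\partial U(k)$, not only at the top and bottom points---e.g.\ $p(0,t)=\tfrac{1+k}{1-k}$ gives tangency at the real point $\bigl(\tfrac{1-k}{1+k}\bigr)^2$.
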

\begin{proof}
If a function $f\in\clSt{k}$ is generated by a Herglotz function $p$ satisfying~\eqref{EQ_Becker-condition}, then the identity
$$
p_0\big(e^{i\Im Q(t)}z,\,\Re Q(t)\big)=L_t(p(z,t)),{~}{\text{~where~}}{~}L_t(z):=\frac{p(z,t)-i\Im p(0,t)}{\Re p(0,t)},
$$
and $Q(t):=\int_0^t\!p(0,s)\di s$, defines a Herglotz function $p_0$ that
obeys the normalization $p_0(0,t)=1$ for a.e.\,$t\ge0$ and, moreover, generates the same function~$f$. The latter can be verified using the change of variables $\tau:=Q(t)$, $\omega(\tau):=e^{i\Im Q(t)}w(t)$ that transforms the Loewner\,--\,Kufarev ODE~\eqref{EQ_LK-ODE0} to $\di \omega/\di\tau=-\omega p_0(\omega,\tau)$.

Note that $L^{\!\UD}_t:=H\circ L_t\circ H^{-1}$, where $H(\zeta):=(\zeta-1)/(\zeta+1)$, is an automorphism of~$\UD$ that sends $z_0(t):=H\big(p_t(0,t)\big)$ to~$0$. Taking into account that by~\eqref{EQ_Becker-condition}, $H\big(p(\UD,t)\big)\subset k\overline\UD$ for a.e.\,$t\ge0$, we see that $H\big(p_0(\UD,t)\big)=L_t^{\!\UD}\big(H\big(p(\UD,t)\big)\big)$ is contained for a.e.\,$t\ge0$ in $\kappa\overline\UD$, where $\kappa:=2k/(1+k^2)$. The conclusion of the proposition follows immediately.
\end{proof}

One natural way to attack the above Problems~\ref{PRO_strong} and~\ref{PRO_weak} would be to propose several constructions of  Loewner chains $(f_t)$ starting from an arbitrary given function~${f_0=f\in\clS_k}$, with images $f_t(\UD)$ being Jordan domains for all~$t\ge0$, and try to find out whether the map $F:\Complex\to\Complex$ defined by~\eqref{EQ_BeckerExt} is quasiconformal for any of these constructions.

Here we examine two quite natural constructions and show that unfortunately, both fail in general.
Fix some locally absolutely continuous function $\omega:[0,+\infty)\to\partial\UD$ and let ${\rho:[0,+\infty)\to[1,+\infty)}$ be a strictly increasing continuous function with ${\rho(0)=1}$ and ${\lim_{t\to+\infty}\rho(t)=+\infty}$.

\vskip1ex
\noindent{\bf Construction~1.} Let $\Phi:\Complex\to\Complex$ be a $k$-q.c. map such that $f_0:=\Phi|_\UD\in\clS$. For~${t\ge0}$, denote by $f^\Phi_t$, the conformal map of $\UD$ onto $\Phi\big(\rho(t)\UD\big)$ normalized by ${f^\Phi_t(0)=0}$, ${\omega(t)(f^\Phi_t)'(0)>0}$. For a suitable choice of the function~$\rho$, the family~$(f^\Phi_t)_{t\ge0}$ is a Loewner chain. Using Courant's Theorem, see e.g. \cite[Theorem~IX.14]{Tsuji}, it is possible to show that formula~\eqref{EQ_BeckerExt} defines a homeomorphism $F$ of~$\C$.
\vskip1ex
\noindent{\bf Construction~2.} Let $f\in\clS_k$. Denote by $g$ the conformal map of $\Complex\setminus\overline\UD$ onto $\Complex\setminus\overline{f(\UD)}$. For~${t\ge0}$, consider the conformal map~$f_t^g$ of~$\UD$, ${f^g_t(0)=0}$, ${\omega(t)\big(f^g_t\big)'(0)>0}$, onto the Jordan domain bounded by~${g\big(\{z:|z|=\rho(t)\}\big)}$. For a suitable choice of the function~$\rho$, the family~$(f^g_t)_{t\ge0}$ is a Loewner chain and the map~$F$ that it generates via~\eqref{EQ_BeckerExt} is a homeomorphism of~$\C$.
\vskip1ex
We will say that the function $\rho$ is \textsl{admissible} in Construction~1 or, respectively, in Construction~2, if the family $(f_t^\Phi)$, or respectively, the family $(f^g_t)$ is a Loewner chain. Note that admissibility of~$\rho$ does not depend on the choice of~$\omega$.

\begin{proposition}
There exists a $(1/\sqrt{2})$-q.c. map $\Phi:\Complex\to\Complex$ with $f:=\Phi|_\UD\in\clS$ such that the homemorphisms~$F$  defined in Constructions~1 and~2 are not quasiconformal for any admissible ${\rho:[0,+\infty)\to[1,+\infty)}$  and any locally absolutely continuous ${\omega:[0,+\infty)\to\partial\UD}$.
\end{proposition}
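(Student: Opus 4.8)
The plan is to take for $\Phi$ a $(1/\sqrt2)$-q.c. map of the same type as the power example discussed above: fix $\sigma\in(0,2)$ with $|\sigma-1|=1/\sqrt2$ and let $\Phi:=F_\sigma$ be the associated extremal extension, so that $f:=\Phi|_\UD=f_\sigma\in\clS$ and $\Omega:=f(\UD)$ is a quasidisk whose boundary has a genuine corner, of interior opening $\pi\sigma$, at a point $w_0=f(z_0)$, $z_0\in\partial\UD$. Both constructions will be shown to fail because of the local geometry at this one corner. The value $1/\sqrt2$ plays no essential role beyond fixing a concrete admissible dilatation; any $\sigma\neq1$ drives the same mechanism. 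Note that only $\Omega$ enters Construction~2 (through the exterior uniformizer $g$ of $\C\setminus\overline\Omega$), whereas the full map $\Phi$ enters Construction~1, so the corner analysis must be run for each of the two foliations $\big(\Phi(\rho\,\partial\UD)\big)_{\rho>1}$ and $\big(g(\{|z|=\rho\})\big)_{\rho>1}$.

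First I would reduce non-quasiconformality to a statement about the Herglotz function. The domains $\Phi(\rho\UD)$ and $\tilde\Omega_\rho$ are Jordan, so every $f_t$ extends to a homeomorphism of $\overline\UD$; hence, were the homeomorphism $F$ from either construction quasiconformal, the converse to Becker's theorem recorded in Remark~\ref{RM_2018} would force the Herglotz function $p$ of the chain to satisfy $p(\UD,t)\subset U(k_1)$ for some $k_1<1$ and a.e.\ $t$. Equivalently $\esssup_{t,\zeta}\big|H(p(\zeta,t))\big|\le k_1<1$ with $H(w):=(w-1)/(w+1)$, so $p(\zeta,t)$ would remain in a fixed compact subset of the right half-plane, bounded away from the imaginary axis. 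It therefore suffices to exhibit, for each construction, boundary points $(\zeta_n,t_n)$ with $t_n\to0^+$ and $\zeta_n\to z_0$ along which $\big|H(p(\zeta_n,t_n))\big|\to1$, i.e.\ along which $p$ approaches the imaginary axis.

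The geometric core is that near $w_0$ the foliating curves expand by an approximate dilation \emph{centred at the corner}. For Construction~2 this is exact to leading order: with the local power-law form $g(z)\sim(z-\zeta_0)^{2-\sigma}$ at the exterior preimage $\zeta_0$ of $w_0$, the part of $\partial\tilde\Omega_\rho=g(\{|z|=\rho\})$ near $w_0$ equals $\varepsilon(t)\,C$ for a fixed profile curve $C$, where $\varepsilon(t)=s(t)^{\,2-\sigma}\to0$ is the distance from $w_0$ to the curve and $s\sim\rho-1$. Since the two edges meeting at the corner are straight segments through $w_0$, a dilation about $w_0$ slides boundary points \emph{along} the edges, i.e.\ tangentially to $\partial\Omega$; so the normal velocity of a near-corner boundary point is of lower order than its tangential velocity. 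Inserting the self-similar ansatz $f_t(\zeta)\approx\varepsilon(t)\,\Xi\big((\zeta-\tau(t))/\varepsilon(t)\big)$ into $p=\partial_t f_t/(\zeta f_t')$ and using $\Xi(\eta)\sim\eta^{\sigma}$ on the edges, one finds the leading edge contribution to $p(\zeta,t)$ to be real up to the unimodular factor $\tau(t)^{-1}$; imposing $\Re p\ge0$ on both edges then forces $\tau(t)^{-1}$ to be asymptotically imaginary, whence $\Re p$ is of strictly lower order than $\Im p$ and $\arg p(\zeta_n,t_n)\to\pm\pi/2$ along edge points approaching $(z_0,0)$. This gives $\big|H(p(\zeta_n,t_n))\big|\to1$. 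The same scheme applies to Construction~1, the exterior behaviour of $\Phi$ at $w_0$ replacing $g$: the curves $\Phi(\rho\,\partial\UD)$ again expand by an approximate dilation about $w_0$ rather than along the extremal Teichm\"uller trajectories that a Becker-quasiconformal filling would require.

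Uniformity over all admissible $\rho$ and all $\omega$ comes for free once the obstruction is phrased through $\arg p$. Two admissible choices of $\rho$ merely reparametrise time on the \emph{same} family of curves, replacing $p$ by $\lambda(t)\,p$ with $\lambda(t)>0$; this preserves $\arg p$ and hence the approach to the imaginary axis. The rotation $\omega$ only precomposes each $f_t$ with a fixed rotation of $\UD$, shifting the $\zeta$-variable and leaving the blow-up of $\big|H(p)\big|$ intact. I expect the main obstacle to be precisely the conformal-map asymptotics underlying the self-similar ansatz: making rigorous the statement that $f_t$ is, to leading order, the $\varepsilon(t)$-dilate of a fixed map $\Xi$ near the rounded corner, and controlling $|f_t'|$ and the drift $\tau(t)$ of the corner preimage uniformly as $(\zeta,t)\to(z_0,0)$. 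This requires Warschawski/Pommerenke-type boundary estimates at a rounded corner and is hardest for Construction~1, where $\Phi$ is merely quasiconformal and no closed-form Loewner chain is available, so the dilation-about-$w_0$ behaviour of $\Phi(\rho\,\partial\UD)$ must be extracted from the structure of $\Phi$ near $z_0$ directly.
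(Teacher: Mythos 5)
There is a decisive gap: your chosen map $\Phi:=F_\sigma$ cannot witness the proposition, because $F_\sigma$ is itself a Becker extension. As recalled at the start of Sect.~\ref{SS_examples} (and in \cite[Example~2]{Istvan}), the Beltrami coefficient of $F_\sigma$ satisfies $\mu(\rho\zeta)=\zeta^2\psi_\rho(\zeta)$, which means precisely that $F_\sigma$ is of the form~\eqref{EQ_BeckerExt} for a Loewner chain $(f_t)$ whose image domains are $f_t(\UD)=F_\sigma(e^t\UD)$. Hence Construction~1 applied to $\Phi=F_\sigma$ with the admissible choice $\rho(t)=e^t$ and $\omega(t):=\overline{f_t'(0)}\big/|f_t'(0)|$ (which is locally absolutely continuous, since $f_t'(0)=\exp\int_0^t p(0,s)\,\di s$ with $p$ continuous in~$t$) reproduces exactly this chain, and the homeomorphism $F$ it generates \emph{equals} $F_\sigma$, which is $|\sigma-1|$-quasiconformal. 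So for your $\Phi$ the conclusion fails for Construction~1, and no refinement of the corner analysis can repair this: the corner of $\partial f_\sigma(\UD)$ at $w_0$ is not an obstruction by itself, since only the level domains $\Phi(\rho\UD)$, $\rho>1$, enter Construction~1, and for the Teichm\"uller extension of the power map these carry a perfectly good Becker chain. This also undercuts your Construction~2 heuristic, which rests on the same local corner mechanism without ever deciding whether the exterior level curves $g(\{|z|=\rho\})$ actually obstruct a quasiconformal filling for this example.

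Beyond the wrong example, the analytic core of the proposal is unproven where it matters: the self-similar ansatz $f_t\approx\varepsilon(t)\,\Xi\big((\cdot-\tau(t))/\varepsilon(t)\big)$ and the conclusion $\arg p\to\pm\pi/2$ are heuristics you yourself flag as the main obstacle, and the reduction of the rotation $\omega$ to a harmless shift is not quite right: a time-dependent rotation $e^{i\beta(t)}$ changes the Herglotz function by an \emph{additive} imaginary term $i\beta'(t)$, not merely by moving~$\zeta$ (this happens to be harmless for forcing $\big|H(p)\big|\to1$, but it must be said). For comparison, the paper picks the logarithmic-spiral function~\eqref{EQ_f-example} with Betker's extension, precisely so that the obstructions depend only on the data that all admissible $(\rho,\omega)$ share: for Construction~1 it computes that $\partial\Phi(r\UD)$ has an internal corner of opening $2\arctg(1/2)<\pi/2$ at~$\Phi(-r)$ and invokes the Hardy-space theorem of~\cite{Betker_Hardy}, which rules out \emph{every} Loewner chain through these domains at once; for Construction~2 it exploits the symmetry $g(z)=-4/f(-1/z)$ to show, via the normal-velocity identity and~\eqref{EQ_limit-property} (your use of Remark~\ref{RM_2018} is the one step that matches the paper), that quasiconformality of~$F$ would make the rescaled weldings $\big(g^{-1}\circ f_t|_{\partial\UD}\big)/\rho(t)$ uniformly Lipschitz, contradicting the non-Lipschitz welding at~$t=0$. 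To salvage your approach you would need both a different example and a rigorous replacement for the self-similar asymptotics.
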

\begin{proof}
Consider the function
\begin{equation}\label{EQ_f-example}
f(z):=\frac{2z(iz+\sqrt{1-z^2})^i}{1+\sqrt{1-z^2}}=\frac{2ze^{-\arcsin z}}{1+\sqrt{1-z^2}},\quad z\in\UD,
\end{equation}
choosing the unique single-valued branch in~$\UD$ that belongs to~$\clS$. It is not difficult to check that
\begin{equation}\label{EQ_p}
p(z):=\frac{f(z)}{zf'(z)}=\sqrt{\frac{1+z}{1-z}}\quad\text{for all $z\in\UD$}.
\end{equation}
In particular, $\big|\arg p(z)\big|\le\pi/4$. Therefore, by a result of Betker~\cite[p.\,110]{Betker}, see also~\cite[\S5.1]{Ikkei-LM}, $f$ can be extended to a $(1/\sqrt2)$\,-\,q.c. automorphism $\Phi:\Complex\to\Complex$ as follows.

The image $f(\UD)$ is a starlike Jordan domain symmetric w.r.t.~$\Real$ and bounded by two segments of logarithmic spirals. Namely, $\partial f(\UD)={\{2\exp(-\pi/2+|\theta|+i\theta)\colon\theta\in[-\pi,\pi]\}}$. It follows that for any~$z\in\UD\setminus\{0\}$ the intersection of $\partial f(\UD)$ and $\{tf(z):t\ge0\}$ consists of one point $\zeta(z)$, with $r(z):=|\zeta(z)|=2\exp\!\big(|\!\Arg f(z)|-\pi/2\big)$, where $\Arg w$ stands for the value of $\arg w$ that belongs to~$(-\pi, \pi]$.

Betker's q.c.-extension of~$f$, see e.g. \cite[eq.\,(5.6) with~$\lambda:=0$]{Ikkei-LM},  is given by
$$
\Phi(z):=\frac{~r(1/\bar z)^2}{\hphantom{m}\vphantom{\int_0^1}\overline{f(1/\bar z)}\hphantom{m}}=\frac{4e^{-\pi}}{f(1/z)}\left(\frac{f(1/\bar z)}{|f(1/\bar z)|}\right)^{\! -2i\,\eta(z)}\!\!=~\frac{4e^{-\pi}}{f(1/z)}\left(\frac{f(1/z)}{f(1/\bar z)}\right)^{\! i\,\eta(z)}
$$
for all~$z\in\C\setminus\UD$, where $\eta(z):=\sgn\Im z$. Simple calculations give
$$
\frac{\Phi'_z(z)}{\Phi(z)}=\frac{1-i\eta(z)}{z^2}\frac{f'(1/z)}{f(1/z)},\quad %
\frac{\Phi'_{\bar z}(z)}{\Phi(z)}=\frac{i\eta(z)}{\bar{z}^2}\frac{f'(1/\bar z)}{f(1/\bar z)},\qquad |z|>1,~z\not\in\Real.
$$
Using the above formulas we see that for any $r>1$ the boundary of $D_r:=\Phi(r\UD)$ consists of two real-analytic arcs with common end-points at $\Phi(\pm r)$, where they form angle of magnitude $2\arctg(1/2)<\pi/2$. The angle at $\Phi(-r)$ is internal w.r.t. $D_r$. It follows that conformal mappings of $\UD$ onto $D_r$ do not belong to the Hardy space~$H^2(\UD)$. Therefore, by the main result of~\cite{Betker_Hardy}, there is no Loewner chain with image domains~$D_r$ that defines a q.c.-extension via~\eqref{EQ_BeckerExt}. Therefore, the homeomorphism~$F$ in Construction~1 generated by the Loewner chain~$(f^\Phi_t)$ is not quasiconformal, whichever $\rho$ and~$\theta$ we choose.

\stackMath
\newcommand{\fdot}{\vphantom{f_t}\smash{\stackengine{-.075ex}{f_t}{\boldsymbol{\cdot\hskip.175em}}{O}{r}{F}{\useanchorwidth}{S}}}

Let us now consider Construction~2 with the same function ${f\in\clS_{1/\sqrt{2}}}$ as above. One remarkable property of $\Omega:=f(\UD)$ is that $\{1/z\colon z\in\ComplexE\setminus\overline{\Omega}\}=-\tfrac14\Omega.$ It follows that, up to rotation, $g(z)=-4/f(-1/z)$ for all $z\in\Complex\setminus\overline\UD$. Suppose that for a suitable choice of the functions $\rho$ and~$\omega$, the homeomorphism~$F:\Complex\to\Complex$ defined  with the help of the Loewner chain~$(f^g_t)$ is $k$-quasiconformal for some ${k\in(0,1)}$. Then arguing as in~\cite[proof of Theorem~2]{Istvan}, we see that for all ${t\ge0}$ aside from some null-set~$N$, $\fdot:=\partial f_t/\partial t$ and $f_t'$ exist a.e. on~$\partial\UD$, do not vanish, and
\begin{equation}\label{EQ_limit-property}
\frac{\fdot(e^{i\theta})}{e^{i\theta}f_t'(e^{i\theta})}\,\in\,U(k),
\end{equation}
where~$U(k)$ is defined in Theorem~\ref{TH_Becker}. Moreover, by construction, $\partial f_t(\UD)$ is $C^{\infty}$ when~${t>0}$. Hence, in fact, $f_t'$ extends smoothly to~$\partial\UD$  for all~$t>0$; see e.g. \cite[Chapter~3]{Pommerenke:BB}.

Taking into account that $\big|g^{-1}\big(f_t(e^{i\theta})\big)\big|=\rho(t)$ for all~${t\ge0}$ and all~${\theta\in[0,2\pi]}$, it follows that $\rho'(t)$ exists for any~$t\in(0,+\infty)\setminus N$ and for the normal velocity of~$\partial f_t(\UD)$ we have
$$
|f'_t(e^{i\theta})|\,\Re\!\Big(\frac{\fdot(e^{i\theta})}{e^{i\theta}f'(e^{i\theta})}\Big)~=~%
\frac{\Re\big(\fdot(e^{i\theta})\,\overline{e^{i\theta}f'(e^{i\theta})}\,\big)}{|f'_t(e^{i\theta})|}~=~%
\rho'(t)\,\big|g'\big(g^{-1}(f_t(e^{i\theta}))\big)\big|.
$$
Together with~\eqref{EQ_limit-property} this implies that on the one hand, for any~$t\in(0,+\infty)\setminus N$,
\begin{equation}\label{EQ_ravn}
\frac1K\le\left|\frac{\rho'(t)\,g'\big(g^{-1}(f_t(e^{i\theta}))}{f_t'(e^{i\theta})}\right|\le K:=\frac{1+k}{1-k}\quad \text{for all~$\theta\in[0,2\pi]$}.
\end{equation}

On the other hand,
\begin{equation}\label{EQ_integration}
2\pi\rho(t)~=~\int_{0}^{2\pi}\left|\frac{\di g^{-1}(f_t(e^{i\theta}))}{\di \theta}\right|\,\di\theta%
~=~\int_{0}^{2\pi}\left|\frac{f_t'(e^{i\theta})}{g'\big(g^{-1}(f_t(e^{i\theta}))}\right|\,\di\theta
\end{equation}

Combining~\eqref{EQ_ravn} with~\eqref{EQ_integration}, we see that
$$
\frac{\rho(t)}{K^2}\le \left|\frac{f_t'(e^{i\theta})}{g'\big(g^{-1}(f_t(e^{i\theta}))}\right|\le \rho(t) K^2\quad t>0,~t\not\in N.
$$
Therefore, the conformal weldings $\gamma_t:=\big(g^{-1}\circ f_t|_{\partial\UD}\big)\big/\rho(t)$, $t\in(0,+\infty)$, are $K^2$-Lipschitz continuous. Using Carath\'eodory's Extension Theorem (see e.g. \cite[p.\,18]{Pommerenke:BB}) and Courant's Theorem (see e.g. \cite[Theorem~IX.14]{Tsuji}) we conclude that $\gamma_t\to\gamma_0$ as $t\to0^+$. It follows that $\gamma_0$ has to be also Lipschitz-continuous, but in reality it is not. This contradiction shows that $F$ is not quasiconformal.
\end{proof}

\section{A sufficient condition for Becker extendibility}
Below we prove a sufficient condition for a holomorphic function to be Becker extendible, i.e. to have a q.c.-extension of the form~\eqref{EQ_BeckerExt}. This simple result is probably known to specialists: somewhat similar ideas appeared e.g. in~\cite{Betker_Phi} and~\cite[equation\,(11)]{Ikkei1}. However, it does not seem to be ever stated in the form as presented below. For the notions of a meromorphic function of several complex variables and that of an analytic set we refer the reader to~\cite[\S15, \S8]{Shabat}.
\begin{theorem}
\label{TH_PDE_THM}
Let $f$ be a holomorphic function in $\D$, with $f'(0)-1=f(0)=0$.
Suppose that there exists a meromorphic solution $\Phi : \C \times \D \to \C$ to the PDE initial value problem
\begin{align}
 &\Phi'_w(z, \,w ) = \varphi(z,w)\, \Phi'_z(z, \,w), & (z,w)\in \C \times \D;\label{EQ_PDE1}\\
 &\Phi(z,z)  = f(z), & z\in\UD,\label{EQ_PDE2}
\end{align}
with a coefficient $\varphi$ meromorphic in~$\C \times \D$ and satisfying the following two conditions:
\begin{itemize}
\item[(i)] $\varphi(0,0)=0$;
\item[(ii)] $r\!\left|\varphi(w/r,w)\right| \le k$ for all $w\in\D$ and all $r\in\big(|w|^2,1\big)$.
\end{itemize}
Suppose also that there exists $\varepsilon\in(0,1)$ and $M>0$ such that
\begin{equation}\label{EQ_Phi-at-infinity}
|\Phi(z,w)|\le M |z|\quad\text{whenever~$|w|\le |z|$ and $|z\,w|\le \varepsilon^2$.}
\end{equation}
Then $f$ admits a $k$-q.c. Becker extension given by
\begin{equation}
F(z) := \Phi(z, 1/\bar{z}), \hspace{15pt} |z|>1.
\end{equation}
In particular, $f\in\clS_k^B$.
\end{theorem}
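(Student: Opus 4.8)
The plan is to exhibit the radial Loewner chain underlying $F$ and then quote Theorem~\ref{TH_Becker}. Guided by the shape $F(z)=\Phi(z,1/\bar z)$ and the initial condition~\eqref{EQ_PDE2}, I set
\[
 f_t(z):=\Phi\bigl(e^{t}z,\,e^{-t}z\bigr),\qquad z\in\UD,\ t\ge0 .
\]
Then $f_0=f$ and, for $z=e^{i\theta}$ and $t=\log\rho$ with $\rho>1$, one has $f_{\log\rho}(e^{i\theta})=\Phi(\rho e^{i\theta},\rho^{-1}e^{i\theta})=\Phi(z,1/\bar z)$, which is exactly the boundary prescription in~\eqref{EQ_BeckerExt}. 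The first use of~\eqref{EQ_Phi-at-infinity} is to make this definition legitimate near the origin \emph{uniformly in $t$}: since $|e^{-t}z|\le|e^{t}z|$ and $|e^{t}z\cdot e^{-t}z|=|z|^2$, the estimate gives $|f_t(z)|\le Me^{t}|z|$ on the fixed disk $\{|z|<\varepsilon\}$ for every $t\ge0$, so no pole of $\Phi$ can approach the origin as $t\to+\infty$ and each $f_t$ is holomorphic on $\{|z|<\varepsilon\}$.

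Next I would compute the Loewner--Kufarev equation there. With $u=e^{t}z$, $v=e^{-t}z$ and $\psi:=e^{-2t}\varphi(u,v)$, differentiating $f_t$ and substituting $\Phi'_w=\varphi\,\Phi'_z$ from~\eqref{EQ_PDE1} gives
\[
 \dot f_t(z)=z\,\Phi'_z(u,v)\bigl(e^{t}-e^{-t}\varphi(u,v)\bigr),\qquad
 f_t'(z)=\Phi'_z(u,v)\bigl(e^{t}+e^{-t}\varphi(u,v)\bigr),
\]
so that $\dot f_t=z f_t'\,p(\cdot,t)$ with
\[
 p(z,t)=\frac{1-\psi}{1+\psi},\qquad \frac{p(z,t)-1}{p(z,t)+1}=-\psi=-e^{-2t}\varphi\bigl(e^{t}z,e^{-t}z\bigr).
\]
Condition~(i) gives $p(0,t)=1$, and differentiating~\eqref{EQ_PDE2} together with $\varphi(0,0)=0$ gives $f_t(0)=0$, $f_t'(0)=e^{t}$. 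Finally, putting $w:=e^{-t}z\in\D$ and $r:=e^{-2t}$, which lies in $(|w|^2,1)$ precisely because $z\in\UD$ and $t>0$, condition~(ii) reads $|\psi|=r\,|\varphi(w/r,w)|\le k$; hence $p(\UD,t)\subset U(k)$, in particular $\Re p>0$. Thus $p$ is a normalized Herglotz function obeying Becker's condition~\eqref{EQ_Becker-condition}.

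The main obstacle is to promote $(f_t)$ from this local object to a genuine radial Loewner chain on all of $\UD$, i.e. to rule out poles of $z\mapsto\Phi(e^{t}z,e^{-t}z)$ away from the origin. I would compare $(f_t)$ with the canonical chain $(\tilde f_t)$ generated by $p$ through~\eqref{EQ_LK-ODE0}--\eqref{EQ_f-limit}, which is univalent on the whole disk. Differentiating $t\mapsto f_t\bigl(w(z,t)\bigr)$ along the flow $w$ of~\eqref{EQ_LK-ODE0} and using the just-derived PDE shows $f_t(w(z,t))\equiv f(z)$ near the origin; letting $t\to+\infty$, where $w(z,t)\to0$, and controlling the Taylor tail by the Cauchy estimates $|a_n(t)|\le C_n e^{t}$ that follow from~\eqref{EQ_Phi-at-infinity}, identifies $\tilde f_0=\lim_{t}e^{t}w(z,t)=f$. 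Consequently $f_t$ and $\tilde f_t$ solve the same equation with identical data at $t=0$, so they coincide near $0$. Since $\Phi$ is holomorphic at $(0,0)$, the curve $z\mapsto(e^{t}z,e^{-t}z)$ is not contained in the polar set of $\Phi$, so $z\mapsto\Phi(e^{t}z,e^{-t}z)$ is meromorphic on $\UD$; agreeing with the holomorphic $\tilde f_t$ on a neighbourhood of $0$, it must, by the identity principle, equal $\tilde f_t$ throughout $\UD$ and have no poles there.

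It follows that $(f_t)$ is the radial Loewner chain generated by the normalized Herglotz function $p$, and $p$ satisfies~\eqref{EQ_Becker-condition}. Theorem~\ref{TH_Becker} then shows that the Becker extension~\eqref{EQ_BeckerExt} of $f_0=f$ is $k$-quasiconformal; by the boundary identity $f_{\log\rho}(e^{i\theta})=\Phi(z,1/\bar z)$ noted above, this extension is exactly $F(z)=\Phi(z,1/\bar z)$ for $|z|>1$. Since $p$ is normalized, $f\in\clS_k^B$, which completes the argument. The only delicate step is the pole-exclusion in the third paragraph; the remaining steps are a direct computation followed by an appeal to the classical Loewner--Becker theory.
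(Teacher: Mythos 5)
Your proof is correct and is essentially the paper's own argument: the same chain $f_t(\zeta)=\Phi(e^t\zeta,e^{-t}\zeta)$, the same identity $(1-p)/(1+p)=e^{-2t}\varphi(e^t\zeta,e^{-t}\zeta)=r\varphi(w/r,w)$ turning (i) and (ii) into a normalized Herglotz function with Becker's condition~\eqref{EQ_Becker-condition}, the same use of~\eqref{EQ_Phi-at-infinity} to get holomorphy on $\varepsilon\UD$ with $|f_t|\le Me^t$, the same polar-set/identity-principle globalization, and the same final appeal to Theorem~\ref{TH_Becker}. The one divergence is that where the paper simply cites Pommerenke's criterion \cite[Theorem~6.1 on p.\,159]{Pommerenke} to promote the locally defined solution to a genuine radial Loewner chain, you re-derive that criterion by hand (constancy of $f_t(w(z,t))$ along the flow, then $t\to+\infty$ with the Cauchy-estimate tail bound); this is the standard proof of the cited criterion, so it makes your argument self-contained but is not a different route. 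One cosmetic slip in your pole-exclusion paragraph: the hypotheses do not guarantee that $\Phi$ is holomorphic at $(0,0)$, since the bound~\eqref{EQ_Phi-at-infinity} is imposed only on the wedge $|w|\le|z|$, $|zw|\le\varepsilon^2$, which leaves open the possibility that $(0,0)$ is an indeterminacy point of the polar set (think of functions like $w^2/z$, which satisfy such a bound). But that premise is unnecessary: the boundedness of $\zeta\mapsto\Phi(e^t\zeta,e^{-t}\zeta)$ on $\varepsilon\UD$, which you already extracted from~\eqref{EQ_Phi-at-infinity} in your first paragraph, shows the curve cannot be contained in the polar set --- exactly the argument the paper makes --- so the fix is immediate.
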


\begin{remark}
Since $\Phi(0,0)=f(0)=0$, it is sufficient to check condition~\eqref{EQ_Phi-at-infinity} only for~$z$ large enough.
\end{remark}

Before proving Theorem~\ref{TH_PDE_THM}, let us consider a few examples.

\begin{example}
Let $f$ be a holomorphic function in~$\UD$ with $f'(0)-1=f(0)=0$. Set $\varphi(z,w):=(z-w)f''(w)/f'(w)$. Then $\Phi(z,w):=f(w)+(z-w)f'(w)$ solves problem~\eqref{EQ_PDE1},\,\eqref{EQ_PDE2} and  satisfies~\eqref{EQ_Phi-at-infinity}.  Condition~(i) in Theorem~\ref{TH_PDE_THM} holds trivially, while~(ii) is equivalent to $(1-|w|^2)|wf''(w)/f'(w)|\le k$, which is a classical sufficient condition for q.c.-extendibility.
\end{example}
\begin{example}
Similarly, setting $\varphi(z,w):=f'(w)-1$ and $\Phi(z,w):=f(w)+z-w$, we recover another well-known sufficient condition for q.c.-extendibility $\big|f'(w)-1\big|\le k$, $w\in\UD$, see \cite[\S3]{Brown}.
\end{example}

The following corollary represents another example.
\begin{corollary}\label{CR_AW-Becker}
Fix $k\in(0,1)$. Let $f(z)=z+a_2 z^2+\ldots$ be holomorphic in~$\UD$. If
\begin{equation}\label{EQ_AW-Becker}
\tfrac{4\sqrt3}{9}(1-|z|^2)|a_2|\,+\,(1-|z|^2)^2\big|a_2^2+\tfrac12S_f(z)\big|~\le~ k\quad\text{for all~$z\in\UD$},
\end{equation}
then $f\in\clS_k^B$, with its Becker extension given by $F(z)=\Phi(z,1/\bar z)$ for all~$z\in\Complex\setminus\overline\UD$, where
\begin{equation}\label{EQ_Phi}
\Phi(z,w):=f(w)+\frac{f'(w)}{\frac{1}{z-w}+a_2-\frac{1}{2}\frac{f''(w)}{f'(w)}}.
\end{equation}
\end{corollary}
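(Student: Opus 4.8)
The plan is to check that the explicit $\Phi$ in~\eqref{EQ_Phi} fulfils every hypothesis of Theorem~\ref{TH_PDE_THM} and then to quote that theorem. Both $\Phi$ and the coefficient $\varphi$ read off from the PDE are meromorphic in $\C\times\UD$, since $f'$, $f''$ and hence $S_f$ are holomorphic wherever $f'\neq0$. The boundary condition~\eqref{EQ_PDE2} comes for free: on $w=z$ the summand $1/(z-w)$ makes the denominator in~\eqref{EQ_Phi} infinite, so the fraction vanishes and $\Phi(z,z)=f(z)$.

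First I would identify $\varphi=\Phi'_w/\Phi'_z$. Abbreviating the denominator of~\eqref{EQ_Phi} as $D=\tfrac1{z-w}+c(w)$, where $c(w):=a_2-\tfrac12 f''(w)/f'(w)$, one finds $\Phi'_z=f'(w)/\big(D^2(z-w)^2\big)$, and differentiating in~$w$ (using $\tfrac{d}{dw}(f''/f')=S_f+\tfrac12(f''/f')^2$) gives $\Phi'_w$. In the quotient the powers of $D$ cancel; collecting powers of $u:=z-w$ and using $2c+f''/f'=2a_2$ together with the perfect square $(c+\tfrac12 f''/f')^2=a_2^2$, everything telescopes to
\[
\varphi(z,w)=2a_2(z-w)+\big(a_2^2+\tfrac12 S_f(w)\big)(z-w)^2 .
\]
Condition~(i) is then immediate, $\varphi(0,0)=0$.

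The extra summand $a_2$ in the denominator of~\eqref{EQ_Phi} is precisely what makes the behaviour at infinity tractable: it forces $c(0)=0$, because $f''(0)/f'(0)=2a_2$. Hence, for the growth bound~\eqref{EQ_Phi-at-infinity}, I would rewrite the fraction as $f'(w)(z-w)/\big(1+c(w)(z-w)\big)$ and note that in the regime $|w|\le|z|$, $|zw|\le\varepsilon^2$ one has $c(w)=O(w)$ and $\big|c(w)(z-w)\big|=O(|zw|)=O(\varepsilon^2)$; thus for small $\varepsilon$ the denominator stays bounded away from $0$ and $\Phi(z,w)=f(w)+f'(w)(z-w)\big(1+O(\varepsilon^2)\big)$ is $O(|z|)$, which is~\eqref{EQ_Phi-at-infinity}.

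The heart of the proof, and the step I expect to be the main obstacle, is condition~(ii). Putting $z=w/r$, so that $z-w=w(1-r)/r$ and, tellingly, the endpoint $r\to|w|^2$ corresponds to the diagonal $z=1/\bar w$ which carries the Beltrami coefficient of the extension, one gets
\[
r\,\varphi(w/r,w)=2a_2 w(1-r)+\big(a_2^2+\tfrac12 S_f(w)\big)\frac{w^2(1-r)^2}{r}.
\]
With $\sigma:=|w|$, the Schwarzian factor $\sigma^2(1-r)^2/r$ is decreasing on $(\sigma^2,1)$ and bounded by $(1-\sigma^2)^2$, reproducing the second term of~\eqref{EQ_AW-Becker}; the factor in front of $a_2$ obeys $2\sigma(1-r)\le 2\sigma(1-\sigma^2)\le\max_{x\in[0,1]}2x(1-x^2)=\tfrac{4\sqrt3}{9}$, attained at $x=1/\sqrt3$, which is the origin of the constant. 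The delicacy is that both factors peak at the same endpoint $r\to\sigma^2$, so a plain triangle estimate yields only the coefficient $\tfrac{4\sqrt3}{9}$ \emph{without} the factor $(1-\sigma^2)$; recovering the sharp inequality~\eqref{EQ_AW-Becker} as stated requires weighing the $a_2$- and Schwarzian-contributions against one another rather than bounding each by its own maximum, and it is exactly this interplay (together with the global constraint imposed on $S_f$) that must be controlled. Once~(ii) is secured in this way, Theorem~\ref{TH_PDE_THM} applies verbatim and returns $f\in\clS_k^B$ together with the Becker extension $F(z)=\Phi(z,1/\bar z)$.
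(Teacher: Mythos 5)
Your reconstruction of $\varphi(z,w)=2a_2(z-w)+(z-w)^2\big(a_2^2+\tfrac12S_f(w)\big)$, the verification that $\Phi$ solves \eqref{EQ_PDE1},\,\eqref{EQ_PDE2} (including the identity $\big(c+\tfrac12 f''/f'\big)^2=a_2^2$ with $c:=a_2-\tfrac12 f''/f'$), condition~(i), and the growth bound \eqref{EQ_Phi-at-infinity} via $c(0)=0$, $c(w)=O(w)$, $|c(w)(z-w)|=O(|zw|)$ are all correct and essentially identical to the paper's proof, which carries out the same estimate with explicit constants ($|f'(w)|\le K$ and $|c(w)|\le K|w|$ on $\tfrac12\UD$, $\varepsilon^2:=(4K)^{-1}$, giving $|\Phi(z,w)|\le 5K|z|$).

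The gap is exactly where you suspected: you never prove condition~(ii), and in fact no ``weighing of the two contributions'' can close it, because \eqref{EQ_AW-Becker} as printed does not imply~(ii). Since both terms of $r\varphi(w/r,w)=2a_2w(1-r)+\big(a_2^2+\tfrac12S_f(w)\big)w^2(1-r)^2/r$ decrease in modulus as $r$ increases on $(|w|^2,1)$, condition~(ii) for this $\varphi$ amounts (letting $r\downarrow|w|^2$, which, as you note, is where the Beltrami coefficient of $F$ lives) to $\big|2a_2w(1-|w|^2)+e^{2i\arg w}(1-|w|^2)^2\big(a_2^2+\tfrac12S_f(w)\big)\big|\le k$ for all $w\in\UD$. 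The paper's own one-line justification of~(ii) is precisely your ``plain triangle estimate'' $2|a_2|\,|w|(1-|w|^2)\le\tfrac{4\sqrt3}{9}|a_2|$; it therefore proves the corollary under the hypothesis $\tfrac{4\sqrt3}{9}|a_2|+(1-|z|^2)^2\big|a_2^2+\tfrac12S_f(z)\big|\le k$, i.e.\ \emph{without} the factor $(1-|z|^2)$ in the first term. With that factor the implication genuinely fails, at least for small~$k$: take $a_2=P>0$ small and $a_2^2+\tfrac12S_f(z)=\beta z^2$; the left-hand side of \eqref{EQ_AW-Becker} then peaks near $|z|=1/\sqrt3$, and choosing $\beta=\tfrac{27}{4}k-2\sqrt3\,P+O(P^2/k)$ makes that maximum equal to~$k$ (such an $f$ exists and is holomorphic in~$\UD$ for small $k$: solve $u''+\tfrac12S_f u=0$; the denominator solution stays zero-free). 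At $w=1/\sqrt3$, $r\downarrow 1/3$, all terms are positive and aligned, and one computes $r|\varphi(w/r,w)|=\tfrac{4\sqrt3}{9}P+\tfrac{4}{27}\beta+O(P^2/k)=k+\tfrac{4\sqrt3}{27}P+O(P^2/k)>k$; this number is exactly $\big|\mu_F(\sqrt3\,)\big|$, so the claimed extension is not $k$-q.c. In short, the statement carries a spurious factor $(1-|z|^2)$: your proof (and the paper's) becomes complete once the first term of \eqref{EQ_AW-Becker} is replaced by $\tfrac{4\sqrt3}{9}|a_2|$ (or by the sharper $2|z|(1-|z|^2)|a_2|$, which matches the endpoint bound term by term), and Corollary~\ref{CR_AW-Becker0} is unaffected, since its derivation discards the factor $(1-|z|^2)\le1$ anyway.
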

\begin{proof}
Let $\varphi(z,w):=2a_2(z-w)+(z-w)^2\big(a_2^2+\tfrac12S_f(w)\big)$, where $S_f$ stands for the Schwarzian derivative of~$f$.
Then $\Phi$ given by~\eqref{EQ_Phi} solves problem~\eqref{EQ_PDE1},\,\eqref{EQ_PDE2}.

Moreover, there exists $K>1$ such that $\big|f'(w)\big|\le K$ and $\left|a_2-\frac12\big({f''(w)}/{f'(w)}\big)\right|\le K|w|$  for all~$w\in\frac12\UD$. Hence,
for any $(z,w)\in\C\times\UD$ with $|w|\le |z|$ and ${|zw|\le\varepsilon^2}:=(4K)^{-1}$,
$$
 \big|\Phi(z,w)\big|\,\le\,\big|f(w)\big|\,+\,\frac{\big|(z-w)f'(w)\big|}{\left|1 - K|w|\cdot|z-w|\vphantom{\int_0^1}\right|}%
 \,\le\, K|w|\,+\,\frac{2\,|z|\,\big|f'(w)\big|}{1 - 2K\varepsilon^2\,}%
 \,\le\, 5 K|z|.
$$
This proves~\eqref{EQ_Phi-at-infinity}. Finally, since $|w|(1-|w|^2)\le 2\sqrt 3/9$ for all~$w\in\UD$, condition~\eqref{EQ_AW-Becker} ensures that $\varphi$ satisfies~(ii), while~(i) holds trivially.

Thus, the desired conclusion takes place due to~Theorem~\ref{TH_PDE_THM}.
\end{proof}

\begin{remark}
A well-known result by Ahlfors and Weill~\cite{AW62}, see also~\cite{Ahlfors74}, asserts that if a holomorphic function $f:\UD\to\Complex$ satisfies $\frac12(1-|z|^2)^2|S_f(z)|\le k$, where $k\in(0,1)$, for all~$z\in\UD$, then $f$ is univalent and extends to a $k$-q.c. automorphism $F:\ComplexE\to\ComplexE$, with $F$ given by an explicit formula. This extension can be obtained with the help of Becker's construction (see, e.g.,~\cite[\S4]{Becker72} and \cite{Becker80}), but it does not have to fix~$\infty$ and hence $F$ is not a Becker extension in general (which was overlooked in \cite[\S5]{Istvan}). Corollary~\ref{CR_AW-Becker} is a sort of modification of the Ahfors\,--\,Weill condition that ensures extendibility to a q.c.-automorphism of~$\Complex$.
In fact, if $a_2=0$ then the q.c.-extension of~$f$ given in Corollary~\ref{CR_AW-Becker} coincides with the extension constructed by Ahlfors and Weill~\cite{Ahlfors74}.
\end{remark}

It is known, see e.g. \cite[Example~4 on p.\,132]{Lawrynowicz}, that given $k\in(0,1)$, for all $f\in\clS_k$, $|a_2|\le 2k$ and $|S_f(z)|\le 6k/(1-|z|^2)^2$ for any $z\in\UD$. Therefore, Corollary~\ref{CR_AW-Becker} implies immediately the following statement.
\begin{corollary}\label{CR_AW-Becker0}
If $0<k<0.188856\ldots$\,, then $\clS_k\subset \clS_q^B$ with $q:=(3+\tfrac{8\sqrt3}{9})k+4k^2$.
\end{corollary}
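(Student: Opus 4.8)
The plan is to derive the inclusion as an immediate consequence of Corollary~\ref{CR_AW-Becker}, applied with the dilatation bound~$q$ in place of~$k$. Concretely, given an arbitrary $f(z)=z+a_2z^2+\ldots$ in~$\clS_k$, the goal is to show that $f$ satisfies condition~\eqref{EQ_AW-Becker} with $q$ substituted for $k$, and that $q$ stays strictly below~$1$; Corollary~\ref{CR_AW-Becker} then yields $f\in\clS_q^B$. The essential input is the pair of a~priori estimates quoted just before the statement, namely $|a_2|\le 2k$ and $|S_f(z)|\le 6k/(1-|z|^2)^2$ valid for every $f\in\clS_k$ and all $z\in\UD$.

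First I would estimate the two summands on the left-hand side of~\eqref{EQ_AW-Becker} separately. Since $1-|z|^2\le 1$ on~$\UD$, the first term obeys $\tfrac{4\sqrt3}{9}(1-|z|^2)|a_2|\le\tfrac{4\sqrt3}{9}|a_2|\le\tfrac{8\sqrt3}{9}k$. For the second term, the triangle inequality gives $\big|a_2^2+\tfrac12 S_f(z)\big|\le|a_2|^2+\tfrac12|S_f(z)|$, so that, using $(1-|z|^2)^2\le1$ together with the two estimates, $(1-|z|^2)^2\big|a_2^2+\tfrac12 S_f(z)\big|\le|a_2|^2+\tfrac12(1-|z|^2)^2|S_f(z)|\le 4k^2+3k$. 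Adding the two bounds produces, for all $z\in\UD$, the uniform estimate $\tfrac{4\sqrt3}{9}(1-|z|^2)|a_2|+(1-|z|^2)^2\big|a_2^2+\tfrac12 S_f(z)\big|\le\big(3+\tfrac{8\sqrt3}{9}\big)k+4k^2=q$, which is precisely~\eqref{EQ_AW-Becker} with $q$ in the role of~$k$.

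The remaining point is to ensure that Corollary~\ref{CR_AW-Becker} is actually applicable, i.e.\ that $q\in(0,1)$. The condition $q<1$ amounts to $4k^2+\big(3+\tfrac{8\sqrt3}{9}\big)k-1<0$, which holds exactly when $k$ lies below the positive root of the associated quadratic; solving it numerically gives the threshold $0.188856\ldots$ stated in the corollary. This is the only step demanding genuine care, and even it reduces to solving a single quadratic in~$k$. Thus, for every $k$ below that threshold we have $q\in(0,1)$, the displayed bound shows each $f\in\clS_k$ meets the hypothesis of Corollary~\ref{CR_AW-Becker} at level~$q$, and therefore $f\in\clS_q^B$; since $f$ was arbitrary, $\clS_k\subset\clS_q^B$, as claimed.
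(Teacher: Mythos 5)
Your proof is correct and is precisely the argument the paper intends: the paper derives the corollary as an immediate consequence of Corollary~\ref{CR_AW-Becker} together with the quoted bounds $|a_2|\le 2k$ and $|S_f(z)|\le 6k/(1-|z|^2)^2$, which is exactly your computation bounding the left side of~\eqref{EQ_AW-Becker} by $\tfrac{8\sqrt3}{9}k+4k^2+3k=q$. Your identification of the threshold $0.188856\ldots$ as the positive root of $4k^2+\bigl(3+\tfrac{8\sqrt3}{9}\bigr)k-1=0$, guaranteeing $q<1$, is likewise the intended justification.
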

It remains to prove the main result of this section.
\begin{proof}[\fontseries{bx}\selectfont\protect{Proof of Theorem~\ref{TH_PDE_THM}}]
Let $$f_t(\zeta):=\Phi(\zeta e^t,\zeta e^{-t})\quad \text{and}\quad p(\zeta,t):=\frac{\partial f_t(\zeta)/\partial t}{\zeta f_t'(\zeta)}\quad\text{for all $t\ge0$ and~$\zeta\in\UD$.}$$ Thanks to condition~\eqref{EQ_Phi-at-infinity}, these functions are holomorphic in~$\zeta\in\varepsilon\UD$ and real-analytic in~$t\ge0$, and moreover,
$|f_t(\zeta)|\le M e^t$ for all~$\zeta\in\varepsilon\UD$ and $t\ge0$.

Note also that for any fixed $t\ge0$, the point $(\zeta e^t,\zeta e^{-t})$ can lie in the polar set~$\mathcal P$ of~$\Phi$ only for~$\zeta$ belonging to a discrete subset of~$\UD$. Otherwise, since $\mathcal P$ is an analytic set in~$\C\times\UD$, we would have that $(\zeta e^t,\zeta e^{-t})\in\mathcal P$ for all~$\zeta\in\UD$, which contradicts~\eqref{EQ_Phi-at-infinity}. Therefore, $p(\cdot,t)$ and $f_t$ are well-defined meromorphic functions in~$\UD$ for each $t\ge0$.

As an elementary calculation shows, for all~$\zeta\in\UD$ and $t>0$,
$$
\frac{1-p(\zeta,t)}{1+p(\zeta,t)}=e^{-2t}\,\frac{\Phi'_w(\zeta e^t,\zeta e^{-t})}{\Phi'_z(\zeta e^t,\zeta e^{-t})}
=
r\,\varphi(w/r,w),
$$
where $r:=e^{-2t}$ and $w:=\zeta e^{-t}$. Trivially, $|w|^2<r<1$. Therefore,  condition~(ii) implies that $p$ is a Herglotz function satisfying Becker's condition~\eqref{EQ_Becker-condition}.

Clearly, $f_t(0)=\Phi(0,0)=f(0)=0$, $t\ge0$.  Moreover, taking into account~(i), we have  $f'_t(0)=e^t\Phi'_z(0,0)+e^{-t}\Phi'_w(0,0)=e^t\Phi'_z(0,0)=e^t f'(0)=e^t$ for all~$t\ge0$.

We see that $(f_t)$ satisfies the hypothesis of Pommerenke's Criterion \cite[Theorem~6.1 on p.\,159]{Pommerenke}. Hence $(f_t)$ is a classical radial Loewner chain. Furthermore, by Theorem~\ref{TH_Becker}, $f=f_0$ admits a $k$-q.c. extension $F:\Complex\to\Complex$ given by the formula
$$
F(e^{t+i\theta})=f_t(e^{i\theta})=\Phi(e^{t}e^{i\theta},e^{-t}e^{i\theta})=\Phi(z,1/\bar z)\quad \text{for all $z:=e^{t+i\theta}\in\Complex\setminus\overline\UD$,}
$$
which was to be proved.
\end{proof}

\section*{Acknowledgement} The authors are grateful to Professor Toshiyuki Sugawa for fruitful discussions on the topic of the present paper and, in particular, for drawing their attention to reference~\cite{Sugawa2019}.


\begin{thebibliography}{99}
\bibitem{Ahlfors74} L. V. Ahlfors, Sufficient conditions for quasiconformal extension, in {\it Discontinuous groups and Riemann surfaces (Proc. Conf., Univ. Maryland, College Park, Md., 1973)}, 23--29. Ann. of Math. Studies, 79, Princeton Univ. Press, Princeton, NJ. MR0374415

\bibitem{AW62} L. Ahlfors\ and\ G. Weill, \emph{A uniqueness theorem for Beltrami equations}, Proc. Amer. Math. Soc. {\bf 13} (1962), 975--978. MR0148896

\bibitem{Becker72} J. Becker, \emph{L\"{o}wnersche Differentialgleichung und quasikonform fortsetzbare schlichte Funktionen}, J. Reine Angew. Math. {\bf 255} (1972), 23--43. MR0299780

\bibitem{Becker76} J.~Becker, \emph{\"{U}ber die {L}\"osungsstruktur einer {D}ifferentialgleichung
  in der konformen {A}bbildung}, J. Reine Angew. Math. \textbf{285} (1976),
  66--74.

\bibitem{Becker80} J. Becker, Conformal mappings with quasiconformal extensions, in {\it Aspects of contemporary complex analysis (Proc. NATO Adv. Study Inst., Univ. Durham, Durham, 1979)}, 37--77, Academic Press, London. MR0623464

\bibitem{Betker_Hardy} Th. Betker, \emph{L\"{o}wner chains and Hardy spaces}, Bull. London Math. Soc. {\bf 23} (1991), No.\,4, 367--371. MR1125863

\bibitem{Betker_Phi} Th. Betker, \emph{Univalence criteria and L\"{o}wner chains,} Bull. London Math. Soc. {\bf 23} (1991), No.\,6, 563--567. MR1135187

\bibitem{Betker} Th. Betker, \emph{L\"{o}wner chains and quasiconformal extensions,} Complex Variables Theory Appl. {\bf 20} (1992), no.~1-4, 107--111. MR1284357

\bibitem{BracciCD:evolutionII} F.~Bracci, M.~D. Contreras, and S.~D{\'{\i}}az-Madrigal, \emph{Evolution
  families and the {L}oewner equation. {II}. {C}omplex hyperbolic manifolds},
  Math. Ann. \textbf{344} (2009), no.~4, 947--962.

\bibitem{BracciCD:evolutionI} F.~Bracci, M.~D. Contreras, and S.~Diaz-Madrigal, \emph{Evolution families and
  the {L}oewner equation. {I}. {T}he unit disc}, J. Reine Angew. Math.
  \textbf{672} (2012), 1--37.

\bibitem{deBranges} L. de Branges, \emph{A proof of the Bieberbach conjecture,} Acta Math. {\bf 154} (1985), no.~1-2, 137--152. MR0772434

\bibitem{Brown} J. E. Brown, \emph{Quasiconformal extensions for some geometric subclasses of univalent functions,} Internat. J. Math. Math. Sci. {\bf 7} (1984), no.~1, 187--195. MR0743837

\bibitem{Duren} P. L. Duren, {\it Univalent functions}, Grundlehren der Mathematischen Wissenschaften, 259, Springer-Verlag, New York, 1983. MR0708494


\bibitem{Goluzin} G.\,M. Goluzin, {\it Geometric theory of functions of a complex variable}, Amer. Math. Soc., Providence, R.I.,1969. MR0247039 (Translated from G. M. Goluzin, {\it Geometrical theory of functions of a complex variable} (Russian), Second edition, Izdat. ``Nauka'', Moscow, 1966.)

\bibitem{HottaGum::QC-chordal} P. Gumenyuk\ and\ I. Hotta, \emph{Chordal Loewner chains with quasiconformal extensions},
    Math. Z. {\bf 285} (2017), no.~3-4, 1063--1089. MR3623740

\bibitem{Istvan}P. Gumenyuk\ and\ I. Prause, \emph{Quasiconformal extensions, Loewner chains, and the $\lambda$-lemma,} Anal. Math. Phys. {\bf 8} (2018), no.~4, 621--635. MR3881017

\bibitem{Gut}V. Ja. Gutljanski\u{\i}, \emph{Parametric representation of univalent functions}, Dokl. Akad. Nauk SSSR {\bf 194} (1970), 750--753. MR0271324; English translation in Soviet Math. Dokl. {\bf 11} (1970), 1273--1276.

\bibitem{Hamilton}R. S. Hamilton, \emph{Extremal quasiconformal mappings with prescribed boundary values,} Trans. Amer. Math. Soc. {\bf 138} (1969), 399--406. MR0245787

\bibitem{HarringtonOrtel} A. Harrington\ and\ M. Ortel, \emph{Two extremal problems,} Trans. Amer. Math. Soc. {\bf 221} (1976), no.~1, 159--167. MR0409828

\bibitem{Ikkei1} I. Hotta, \emph{Explicit quasiconformal extensions and L\"{o}wner chains,} Proc. Japan Acad. Ser. A Math. Sci. {\bf 85} (2009), no.~8, 108--111. MR2561899

\bibitem{Ikkei2} I. Hotta, \emph{Loewner chains with quasiconformal extensions: an approximation approach,} J. Anal. Math., to appear (arXiv:1605.07839).

\bibitem{Ikkei-LM} I. Hotta\ and\ L.-M. Wang, \emph{Quasiconformal extendability of integral transforms of Noshiro-Warschawski functions,} Rocky Mountain J. Math. {\bf 47} (2017), No.\,1, 185--204. MR3619760

\bibitem{Huang} X. Z. Huang, \emph{On the extremality for Teichm\"{u}ller mappings,} J. Math. Kyoto Univ. {\bf 35} (1995), no.~1, 115--132. MR1317278

\bibitem{Krushkal-extremal} S. L. Krushkal, \emph{Extremal quasiconformal mappings} (Russian), Sib. Mat. Zh. {\bf 10} (1969), no.~3, 573--583. MR0241633; English translation in Sib. Math. J. {\bf 10} (1969), no.~3, 411--418.

\bibitem{Krushkal-a_n} S. L. Krushkal, \emph{Exact coefficient estimates for univalent functions with quasiconformal extension}, Ann. Acad. Sci. Fenn. Ser. A I Math. {\bf 20} (1995), no.~2, 349--357. MR1346818


\bibitem{Kufarev43} Kufarev, P. P. \emph{On one-parameter families of analytic functions} (Russian), Rec. Math. [Mat. Sbornik] N.S. {\bf 13(55)} (1943), 87--118. MR0013800

\bibitem{Kuhnau69} R. K\"{u}hnau, \emph{Wertannahmeprobleme bei quasikonformen Abbildungen mit ortsabh\"{a}ngiger Dilatationsbeschr\"{a}nkung}, Math. Nachr. {\bf 40} (1969), 1--11. MR0249610

\bibitem{KuhnauNiske} R. K\"{u}hnau\ and\ W. Niske, \emph{Absch\"{a}tzung des dritten Koeffizienten bei den quasikonform fortsetzbaren schlichten Funktionen der Klasse $S$}, Math. Nachr. {\bf 78} (1977), 185--192. MR0470205


\bibitem{Lawrynowicz}J. \L awrynowicz, {\it Quasiconformal mappings in the plane}, Lecture Notes in Mathematics, 978, Springer-Verlag, Berlin, 1983. MR0702025

\bibitem{Lehto71} O. Lehto, \emph{Schlicht functions with a quasiconformal extension}, Ann. Acad. Sci. Fenn. Ser. A I No. 500 (1971), 10 pp. MR0294624

\bibitem{Lehto_Majorant} O. Lehto, \emph{On univalent functions with quasiconformal extensions over the boundary}, J. Analyse Math. {\bf 30} (1976), 349--354. MR0466544

\bibitem{Lehto_book} O. Lehto, {\it Univalent functions and Teichm\"{u}ller spaces}, Graduate Texts in Mathematics, 109, Springer-Verlag, New York, 1987. MR0867407

\bibitem{Loewner} K. L\"{o}wner, Untersuchungen \"{u}ber schlichte konforme Abbildungen des Einheitskreises. I, Math. Ann. {\bf 89} (1923), no.~1-2, 103--121. MR1512136

\bibitem{Pommerenke65} C. Pommerenke, \emph{\"{U}ber die Subordination analytischer Funktionen}, J. Reine Angew. Math. {\bf 218} (1965), 159--173. MR0180669

\bibitem{Pommerenke}Ch. Pommerenke, {\it Univalent functions. With a chapter on quadratic differentials by Gerd Jensen}, Vandenhoeck \& Ruprecht, G\"{o}ttingen, 1975.

\bibitem{Pommerenke:BB}Ch. Pommerenke, {\it Boundary behaviour of conformal maps}, Grundlehren der Mathematischen Wissenschaften, 299, Springer-Verlag, Berlin, 1992. MR1217706

\bibitem{Pontryagin}L. S. Pontryagin\ et al.: {\it The mathematical theory of optimal processes} (Russian), fourth edition, ``Nauka'', Moscow, 1983. MR0719372; Translation to English: Interscience Publishers John Wiley \& Sons, Inc.\, New York, 1962. MR0166037

\bibitem{Prokhorov-handbook} D. V. Prokhorov, Bounded univalent functions, in {\it Handbook of complex analysis: geometric function theory, Vol. 1}, 207--228, North-Holland, Amsterdam. MR1966195


\bibitem{Shabat} B. V. Shabat, {\it Introduction to complex analysis. Part II}, translated from the third (1985) Russian edition by J. S. Joel, Translations of Mathematical Monographs, 110, American Mathematical Society, Providence, RI, 1992. MR1192135

\bibitem{Schober} G. Schober, {\it Univalent functions---selected topics}, Lecture Notes in Mathematics, Vol. 478, Springer-Verlag, Berlin, 1975. MR0507770

\bibitem{Strebel:1978} K. Strebel, \emph{On quasiconformal mappings of open Riemann surfaces}, Comment. Math. Helv. {\bf
    53} (1978), No.\,3, 301--321. MR0505549

\bibitem{Sugawa2019} T. Sugawa, \emph{A construction of trivial Beltrami coefficients}, Proc. Amer. Math. Soc. {\bf 147} (2019), No.\,2, 629--635. MR3894901

\bibitem{Tsuji} M. Tsuji, {\it Potential theory in modern function theory}, Chelsea Publishing Co., New York, 1975. MR0414898

\bibitem{YaoF} G. Yao, {\it Hamilton sequences and extremality for certain Teichm\"{u}ller mappings,} Ann. Acad. Sci. Fenn. Math. {\bf 29} (2004), no.~1, 185--194. MR2041707

\bibitem{YaoP} G. Yao, \emph{On criteria for extremality of Teichm\"{u}ller mappings,} Proc. Amer. Math. Soc. {\bf 132} (2004), no.~9, 2647--2654. MR2054790
\end{thebibliography}
\end{document}